\numberwithin{equation}{section}
\newtheorem{theorem}{Theorem}[section]
\newtheorem{corollary}[theorem]{Corollary}
\newtheorem{lemma}[theorem]{Lemma}
\newtheorem{proposition}[theorem]{Proposition}
\newtheorem{remark}[theorem]{Remark}
\renewcommand{\Re}{\operatorname{Re}}
\renewcommand{\Im}{\operatorname{Im}}
\providecommand{\keywords}[1]
{
  \small	
  \textbf{\textit{Keywords---}} #1
}
\title[$2\times 2$ Laguerre-type differential operator]{\boldmath $2\times 2$ Laguerre-type differential operator with triangular eigenvalue}
\author{Yanina González} 
\address[Yanina González]{Facultad de Ciencias Exactas y Naturales, Universidad Nacional de Cuyo, Mendoza, Argentina}
\email{ygonzalez@fcen.uncu.edu.ar}
\author{Victoria Torres}
\address[Victoria Torres]{CIEM-FaMAF, Universidad Nacional de Córdoba, Argentina}
\email{victoria.torres.999@unc.edu.ar}
\begin{document}

\begin{abstract}
In this paper, we present a comprehensive account of all Laguerre-type differential operators $D$ that are symmetric with respect to a $2\times 2$ irreducible weight $W$ on the interval $(0, \infty)$. These operators are associated with monic orthogonal polynomials ${P_n}$, which satisfy the equation $DP_n = P_n\Delta_n$ for a certain lower triangular eigenvalue $\Delta_n$. We introduce three distinct families of operators and weights, each characterized by explicit expressions depending on two or three parameters, along with a new expression based on a single parameter.
\end{abstract}

\keywords{Matrix Laguerre operator, matrix orthogonal polynomials, matrix weight function}

\subjclass[2020]{33C45, 42C05, 34L05, 34L10}

\thanks{This paper was partially supported by  CONICET, PIP
N°:11220200102031, SeCyT-UNC; and SIIP - Universidad Nacional de Cuyo
number $06/M032-T1$.}

\maketitle

\section{Introduction}\label{introduction}

    The foundational work on the general theory of matrix-valued orthogonal polynomials was initiated by M. G. Krein in 1949 \cite{Kr49}. These polynomials satisfy the orthogonality condition given by $\int P_n(t)^*W(t)P_m(t)dt=0$ if $n\neq m$, where $W$ is a matrix weight and each polynomial $P_n$ is a matrix polynomial of degree $n$ with a nonsingular leading coefficient.
    Subsequent research has systematically explored sequences of orthogonal polynomials $\{P_n\}_{n\in\mathbb{N}_0}$ that serve as eigenfunctions of a second-order differential operator $D$. This was initially considered in \cite{D97}, while the first examples were constructed some time after in \cite{GPT01}, \cite{grunbaum2002matrix}, \cite{grunbaum2003matrix}, and \cite{DG04}.
   
    The survey of families of orthogonal matrix polynomials that satisfy second-order differential operators reveals significant differences compared to the scalar case. In 1929, Bochner demonstrated that the only families of orthogonal scalar polynomials satisfying second-order differential equations are the classical Hermite, Laguerre, and Jacobi polynomials \cite{Boch29}. However, a classification analogous to Bochner's results in the matrix context has not yet been fully attained, despite the ground-breaking work \cite{casper2022matrix} in which a general classification is given when certain natural condition is met. This work makes use, among other things, of a rich algebra of differential operators that was previously studied in \cite{CG06}, highlighting the increased complexity of the matrix case.

    In this work, we are only interested in matrix-valued orthogonal polynomials being eigenfunctions of  Laguerre-type operators, specifically second-order differential operators of the form
    \begin{equation*}
    D=tI\partial^{2}+\left(  C-tU\right)  \partial-V  , \quad \partial=\frac{d}{dt},
    \end{equation*}
    with $C,V,U\in\mathbb{C}^{2\times2}$ and $I$ being the identity matrix of size $2$. In the scalar case, such operators have the Laguerre polynomials as their eigenfunctions. However, the corresponding problem for matrix-valued functions remains unresolved. For previous contributions to the study of Laguerre-type matrix polynomials, we refer readers to \cite{CMV07, CG06, D09b, DG04, DG07, DdI08, DL-R07, DS13}. It is worth noting that the methodologies employed in these earlier works primarily focus on the algebras of operators, characterization of matrix weights, and structural formulas associated with orthogonal polynomials, which differ significantly from the approaches used in this article.
    
    We aim to classify {\it explicetely} all $2\times 2$ Laguerre-type operators symmetric $D$ with respect to a $2\times 2$ irreducible matrix weight $W$ with 
    the assumption that the eigenvalues for the (unique) sequence of monic orthogonal polynomials $\{P_n\}_{n\in\mathbb{N}_0}$ are lower triangular matrices $\{\Delta_n\}_{n\in\mathbb{N}_0}$, that is to say $DP_n=P_n\Delta_n$ for all $n\geq 0$. Furthermore, the explicit matrix weights $W$ for which the differential operators are $W$-symmetric are also given.

    The main result is in Section \ref{sec7}, where we prove that any operator with these properties is equivalent to an operator in one of the following three families.
    
    The first family is given by
        \begin{align*}
        D_{\alpha,\beta,b}&=t I\partial^2 +\begin{pmatrix}
                \alpha+1+\beta-t & 0\\
                -b(\beta-2)t & \alpha+1-t
             \end{pmatrix}\partial -\begin{pmatrix}
                 1 & 0\\
                 -b(\alpha+1) & 0
             \end{pmatrix},\\
        W_{\alpha,\beta,b}(t)&=e^{-t}t^{\alpha} \begin{pmatrix}
            t^\beta+b^2t^2 & b t\\[1em]
            b t & 1
        \end{pmatrix},\quad \alpha>-1,\, \beta>-1-\alpha,\, b\neq 0.
    \end{align*}
    
    The second family is given by
        \begin{align*}
        D_{\alpha,b}&=tI\partial^2+  \begin{pmatrix}
                \alpha+5-t & 0\\
                -4b(\alpha+2)t & \alpha+1-t
             \end{pmatrix}\partial -\begin{pmatrix}
                 2 & 0\\
                 -2b(\alpha+2)(\alpha+1) & 0
             \end{pmatrix},\\
             W_{\alpha,b}(t)&=e^{-t}t^{\alpha} 
        \begin{pmatrix}
            t^4+4b^2(\alpha+2)t^2(\alpha+2-t)
            & -b t(t-2(\alpha+2))\\[1em]
            -b t(t-2(\alpha+2))
            & 1
        \end{pmatrix}, \quad \alpha>-1,\, 0<|b|<1.
    \end{align*}
    
    The third family is given by
    \begin{align*}
        D_{\beta}&=tI\partial^2+  \begin{pmatrix}
                3/2-t & \beta/4\\
                t & 1/2-t
             \end{pmatrix}\partial -\begin{pmatrix}
                 1/2 & 0\\
                 -1/2 & 0
             \end{pmatrix},\\
             W_{\beta}(t)&=e^{-t}t^{-1/2} \begin{pmatrix}
                \frac{4t}{\beta}(e^{\sqrt{\beta t}}+e^{-\sqrt{\beta t}})
                &  \frac{2\sqrt{t}}{\sqrt{\beta}}(e^{\sqrt{\beta t}}-e^{-\sqrt{\beta t}}) \\[1em]
                \frac{2\sqrt{t}}{\sqrt{\beta}}(e^{\sqrt{\beta t}}-e^{-\sqrt{\beta t}})
                & e^{\sqrt{\beta t}}+e^{-\sqrt{\beta t}}
            \end{pmatrix}, \quad \beta>0.
    \end{align*}

    \bigskip
    
    To build this classification, we first find necessary and sufficient conditions that matrices $C$, $U$ and $V$ must satisfy. 
    After some preliminaries given in Section \ref{preliminaries}, in Section \ref{sec3} we deduce some properties that the operator $D$ and its associated sequence of monic orthogonal polynomials. More precisely, we obtain the expression of the eigenvalue of $D$ in terms of $U$ and $V$ (Lemma \ref{Deltan}), the recurrence formulas for the coefficients of $P_n$ (Lemma \ref{FnGn}), and the coefficients of the three-term recurrence relation that $\{P_n\}_{n\in\mathbb{N}_0}$ satisfies (Lemma \ref{RR3T}).

    After Section \ref{sec3}, we consider three different cases according to the possible Jordan canonical form of the matrix $U$. We study separately each case in Sections \ref{sec4}, \ref{sec5}, and \ref{sec6}. In each one of these cases, we use the results of Section \ref{sec3} for determining the conditions under which the eigenfunctions $\{P_n\}_{n\in\mathbb{N}_0}$ of the operator $D$ is a sequence of monic orthogonal polynomials with respect to some weight $W$. After that, we give explicitly the expression of the weight $W$ for which $D$ is $W$-symmetric. 
    
    The first two families were previously discussed in  \cite{D09b} (see Theorem 4.2 and Theorem A.3, respectively). Previously, specific instances of the first family have been examined. For $\beta=0$ see \cite[p.16]{CG06}, for $\beta=1/2$ see \cite[p.46]{DG07} and for $\beta=1$ see \cite[p.170]{DdI08}. To the best of our knowledge, the third family was never considered before and it is completely new.
    
\section{Preliminaries}\label{preliminaries}
    
    A complex matrix-valued function $W(t)$ of size $N$ is a matrix weight if it is an integrable function on the possibly infinite interval $\left(  a, b\right)$ such that $W(t)$ is positive definite almost everywhere and with finite moments of all orders. 
    
    We shall denote by $\mathbb{C}^{N\times N}[t]$ the $\mathbb{C}^{N\times N}$-module of all polynomials in the indetermined $t$ with complex-matrix coefficient of size $N$. We assume the matrix weight $W$ to be non-degenerate, and define the matrix inner product over the linear space $\mathbb{C}^{N\times N}[t]$ as
    \[
    \left\langle P,Q\right\rangle _{W}=\int_{a}^b P\left(  t\right)^*
    W\left(  t\right)  Q\left(  t\right)  dt,
    \]
    where $P\left(  t\right)^*$
    denotes the conjugate transpose of $P\left(  t\right)$.
    By the Gram-Schmidt process, a unique sequence of matrix polynomials $\{P_n\}_{n\in\mathbb{N}_0}$ is established to be orthogonal with respect to $W$, satisfying  $\deg(P_n)=n$ and being monic for all $n\geq 0$. Furthermore, any sequence $\{Q_n\}_{n\in\mathbb{N}_0}$ of matrix polynomials orthogonal with respect to $W$ follows the structure $Q_n(t)=P_n(t)M_n$ where $\{M_n\}_{n\in\mathbb{N}_0}$ is some sequence of non-singular matrices.
    
    If $\{Q_n\}_{n\in\mathbb{N}_0}$ constitutes a sequence of orthonormal matrix polynomials with respect to a weight matrix $W$, then $\{Q_n\}_{n\in\mathbb{N}_0}$ satisfies a three-term recurrence relation
    \begin{equation*}
    tQ_{n}\left(  t\right)  =Q_{n+1}\left(  t\right)A_{n+1}  +Q_{n}\left(  t\right)
    B_{n}+Q_{n-1}\left(  t\right)A^{*}_{n}, \quad n\geq 0,
    \end{equation*}
    where $A_{n}$ is nonsingular and $B_{n}$ is Hermitian, with the convention $Q_{-1}(t)=0$. Conversely, by Favard's Theorem \cite{DLR96}, this three-term recurrence relation for matrix polynomials characterizes the orthogonality of a sequence of matrix polynomials. By using this result, one can deduce that  the sequence of monic matrix polynomials $\{P_n\}_{n\in\mathbb{N}_0}$ verifies a three-term recurrence relation 
    \begin{equation}\label{MonicRR}
    tP_{n}\left(  t\right)  =P_{n+1}\left(  t\right)  +P_{n}\left(  t\right)
    B_{n}+P_{n-1}\left(  t\right)A_{n}, \quad n\geq 0,
    \end{equation}
    with $P_{-1}(t)=0$. The existence of a weight matrix $W$ relies on two conditions on the coefficients of the relation: $A_{n}=S_{n-1}^{-1}S_{n}$ is a nonsingular matrix for all $n\geq 1$ and $S_{n}B_{n}$ is Hermitian for all $n\geq 0$, where $S_n=\langle P_n, P_n\rangle_W$.
    
    \bigskip

    A sequence of orthogonal polynomials       $\{P_n\}_{n\in\mathbb{N}_0}$ is eigenfunction of a left-hand side second-order differential operator $D=F_2(t)\partial^2+F_1(t)\partial+F_0$ if verifies the equation  
    \begin{equation}\label{EDO}
        F_2(t)P_n''(t)+F_1(t)P_n'(t)+F_0P_n(t)=P_n\Delta_n, \quad n\geq 0,
    \end{equation}
    where $\{\Delta_n\}_{n\in\mathbb{N}_0}$ are certain matrices. We employ the abbreviated notation $DP_n=P_n\Delta_n$. Specifically, in \cite[Proposition 2.10]{GT} authors proved that it is sufficient to examine differential operators $D$ such that $\deg(F_i)\leq i$ and are $W$-symmetric. A differential operator $D$ is said to be \emph{$W$-symmetric} if it satisfies the symmetry condition, defined as follows: $\left\langle DP, Q\right\rangle _{W}=\left\langle P, DQ \right\rangle _{W}$ for all matrix-valued polynomials $P$ and $Q$. 
    
    The symmetry condition is equivalent to a set of differential equations, known as \emph{symmetry equations}, which are presented in the following theorem.
    
    \begin{theorem}\label{symcond}
    \cite[Theorem 3.1]{DG04} 
    Let $D$ be a second-order differential operator of the form 
        \[
        D=F_2(t)\partial^2+F_1(t)\partial+F_0
        \]
        and let $W$ be a matrix weight supported on the interval $(a,b)$. Then $D$ is $W$-symmetric if and only if the following three differential equations are satisfied
        \begin{gather*}
                F_2^\ast W  = W F_2 ,\\
                2 (W F_2)'  = W F_1 + F_1 ^\ast W ,\\
                (W F_2)'' = (W F_1)' - W F_0 +F_0^* W,
        \end{gather*}
        with the boundary conditions  $\lim_{t \to l} W F_2  = 0$ and $\lim_{t \to l} W F_1 - F_1^\ast W = 0$ for $l = a, b$.
    \end{theorem}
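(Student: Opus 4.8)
The plan is to prove both implications by repeated integration by parts. Since each $F_i$ is a matrix polynomial and $W$ together with its derivatives decays suitably at the endpoints, for arbitrary matrix polynomials $P,Q$ one can move the derivatives falling on $P$ onto the product $WQ$: writing $\langle DP,Q\rangle_W=\int_a^b(P''^{*}F_2^{*}+P'^{*}F_1^{*}+P^{*}F_0^{*})\,WQ\,dt$ and integrating by parts twice in the $P''$-term and once in the $P'$-term, I expect to reach
\[
\langle DP,Q\rangle_W-\langle P,DQ\rangle_W=\mathrm{B}(P,Q)+\int_a^b P^{*}\bigl(\Xi_2\,Q''+\Xi_1\,Q'+\Xi_0\,Q\bigr)\,dt,
\]
with
\[
\Xi_2=F_2^{*}W-WF_2,\qquad \Xi_1=2(F_2^{*}W)'-F_1^{*}W-WF_1,\qquad \Xi_0=(F_2^{*}W)''-(F_1^{*}W)'+F_0^{*}W-WF_0,
\]
and boundary contribution $\mathrm{B}(P,Q)=\bigl[P'^{*}(F_2^{*}W)Q-P^{*}\bigl((F_2^{*}W)'-F_1^{*}W\bigr)Q-P^{*}(F_2^{*}W)Q'\bigr]_a^b$.

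Next I would record the chain of equivalences linking these to the three stated equations: $\Xi_2=0$ is the first equation; granting it, $\Xi_1=0$ reads $2(WF_2)'=WF_1+F_1^{*}W$, the second; granting both, differentiating the second equation and substituting turns $\Xi_0=0$ into $(WF_2)''=(WF_1)'-WF_0+F_0^{*}W$, the third. Also, under the first two equations the boundary term collapses, using $F_2^{*}W=WF_2$ and $(F_2^{*}W)'=\tfrac{1}{2}(WF_1+F_1^{*}W)$, to $\bigl[P'^{*}(WF_2)Q+\tfrac{1}{2}P^{*}(F_1^{*}W-WF_1)Q-P^{*}(WF_2)Q'\bigr]_a^b$, which vanishes exactly when $WF_2\to0$ and $WF_1-F_1^{*}W\to0$ at each endpoint, i.e. under the stated boundary conditions.

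The forward direction is then immediate: the three equations annihilate the integral and the boundary conditions annihilate $\mathrm{B}(P,Q)$, so $\langle DP,Q\rangle_W=\langle P,DQ\rangle_W$ for all $P,Q$. For the converse, assume $\langle DP,Q\rangle_W=\langle P,DQ\rangle_W$ for all matrix polynomials. I would first show $\Xi_2=\Xi_1=\Xi_0=0$ identically, by testing with $Q$ of degree $\le 2$ so that the $\Xi_j$ are isolated one at a time, with $P$ vanishing to high order at the finite endpoint, and using the decay of $W$ to render harmless any contribution at an infinite endpoint. The equivalences above then convert these into the three differential equations, and, the integral having disappeared, the identity $\mathrm{B}(P,Q)=0$ for all $P,Q$ — after collapsing $\mathrm{B}$ as above — forces $WF_2\to0$ and $WF_1-F_1^{*}W\to0$ at each endpoint, the boundary conditions.

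I expect the main obstacle to be exactly this converse step: cleanly separating the bulk data (the differential equations) from the endpoint data (the boundary conditions) when $(a,b)$ may be unbounded, and guaranteeing that polynomials supply enough test functions to force pointwise vanishing of matrix-valued coefficients. The forward direction and the elementary algebra behind the three equivalences are routine bookkeeping.
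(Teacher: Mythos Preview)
The paper does not give its own proof of this theorem: it is quoted verbatim from \cite[Theorem~3.1]{DG04} and used as a tool, so there is nothing in the paper to compare your argument against.

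That said, your outline is the standard integration-by-parts derivation and the algebra you record is correct: the boundary term and the three bulk coefficients $\Xi_2,\Xi_1,\Xi_0$ are exactly what one obtains, and the chain of equivalences reducing $\Xi_j=0$ to the three displayed equations is right. The forward implication is indeed routine. Your own caveat about the converse is the only genuine soft spot: with an unbounded interval and only polynomial test functions, one must be a little careful to argue that $\int_a^b P^{*}\Xi\,dt=0$ for all polynomials $P$ forces $\Xi\equiv 0$ (density of polynomials in a suitable weighted $L^2$ via the finiteness of all moments is the usual route), and that the residual identity $\mathrm{B}(P,Q)=0$ for all $P,Q$ really pins down the two endpoint limits separately. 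These are standard and are handled in \cite{DG04}; if you want a self-contained write-up you should spell them out, but the strategy is sound.
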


    \bigskip
    
    In this context, we establish the following relation: let $D$ be $W$-symmetric and $\Tilde{D}$ be $\Tilde{W}$-symmetric, then the pair $(W, D)$ is said to be \emph{equivalent} to the pair $(\Tilde{W},\Tilde{D})$ if there is a nonsingular matrix $M$ such that $\Tilde{D}=M^{-1}D M$ and $\Tilde{W}=M^* W M$. Moreover, if $\{P_n\}_{n\in\mathbb{N}_0}$ is a sequence of orthogonal polynomials with respect to $W$, then $\{M^{-1}P_nM\}_{n\in\mathbb{N}_0}$ is a sequence of orthogonal polynomials with respect to $\Tilde{W}$. 

   In particular, if $\Tilde{W}$ is a diagonal matrix with scalar weights in its entries, we say that $W$ \emph{reduces to a scalar weight}. A necessary condition for reducibility is provided in \cite[Theorem 4.3]{TZ} and can be reformulated as follows:
        \begin{theorem}\label{opzeroorder}
        If a matrix weight $W$ supported on $(a,b)$ reduces, then there is a non-scalar matrix $V_0$ such that $W(t)V_0=V_0^*W(t)$.
        \end{theorem}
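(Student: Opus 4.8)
The plan is to construct the matrix $V_0$ explicitly by transporting a trivial symmetry of the reduced weight back through the conjugation. Saying that $W$ \emph{reduces} means, by the preceding discussion, that there is a constant nonsingular matrix $M$ with $M^*WM=\tilde W$, where $\tilde W(t)=\operatorname{diag}\bigl(w_1(t),\ldots,w_N(t)\bigr)$ has scalar weights on the diagonal; more generally, if one only assumes that $W$ decomposes as a nontrivial direct sum, then $\tilde W$ is block diagonal. Inverting gives $W(t)=(M^{-1})^*\,\tilde W(t)\,M^{-1}$ for almost every $t$, and this is the relation I would exploit.

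First I would choose an auxiliary constant matrix $\Lambda$ that is ``$\tilde W$-symmetric'' in the required sense and is non-scalar. The natural candidate is a real non-scalar diagonal matrix $\Lambda=\operatorname{diag}(\lambda_1,\ldots,\lambda_N)$ with the $\lambda_i\in\mathbb{R}$ not all equal (in the block case, take $\Lambda$ scalar on each block, with at least two distinct blockwise values). Then $\Lambda^*=\Lambda$ since the entries are real, and $\Lambda$ commutes with $\tilde W(t)$ for every $t$ since both are diagonal (respectively, compatibly block diagonal). Consequently
\[
\tilde W(t)\,\Lambda=\Lambda\,\tilde W(t)=\Lambda^*\,\tilde W(t),\qquad \text{for a.e. }t.
\]

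Next I would set $V_0:=M\Lambda M^{-1}$ and verify the asserted identity. Using $W=(M^{-1})^*\tilde W M^{-1}$,
\[
W V_0=(M^{-1})^*\,\tilde W\,M^{-1}M\,\Lambda\,M^{-1}=(M^{-1})^*\,(\tilde W\Lambda)\,M^{-1},
\]
and
\[
V_0^{\,*} W=(M^{-1})^*\,\Lambda^*M^*\,(M^{-1})^*\,\tilde W\,M^{-1}=(M^{-1})^*\,(\Lambda^*\tilde W)\,M^{-1}.
\]
By the displayed commutation relation the two expressions coincide, so $W(t)V_0=V_0^{\,*}W(t)$ for a.e.\ $t$, which is exactly the claim. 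Moreover $V_0$ is non-scalar: if $M\Lambda M^{-1}=cI$ for some $c\in\mathbb{C}$, then $\Lambda=cI$, contradicting the choice of $\Lambda$.

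There is no genuine analytic obstacle here, since $V_0$ is a constant matrix and the required equality is only asked wherever $W$ is defined; the statement is in essence a linear-algebra fact. The only steps needing attention are interpreting ``$W$ reduces'' correctly so that $\Lambda$ is chosen adapted to the block structure of $\tilde W$, insisting that $\Lambda$ be real so that $\Lambda^*=\Lambda$, and checking that conjugation by $M$ cannot collapse a non-scalar $\Lambda$ to a scalar --- and each of these is immediate.
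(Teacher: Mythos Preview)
Your argument is correct. The construction $V_0=M\Lambda M^{-1}$ with $\Lambda$ a real non-scalar diagonal matrix commuting with $\tilde W$ does exactly what is needed, and your verification of $WV_0=V_0^*W$ via $M^*(M^{-1})^*=I$ is clean.

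As for comparison: the paper does not actually prove this statement. It is quoted as a reformulation of \cite[Theorem~4.3]{TZ} and stated without proof. So there is nothing to compare your approach against within the paper itself; you have supplied a self-contained elementary argument where the authors chose simply to cite the result. Your proof is essentially the standard one underlying the cited theorem: reducibility furnishes a nontrivial commutant for $\tilde W$, and conjugating back gives a nontrivial $*$-symmetry of $W$.
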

        
        Likewise, it can be stated that a family of orthogonal polynomials $\{P_n\}_{n\in\mathbb{N}_0}$ \emph{reduces} if there is an invertible matrix $M$ such that $\{M^{-1}P_nM\}_{n\in\mathbb{N}_0}$ is a diagonal matrix with classic polynomials in its entries. An equivalent condition can be found in the following result.
        \begin{theorem}\label{Pnreducible}\cite[Theorem 4.9]{TZ}
        A sequence of monic orthogonal polynomials $\{P_n\}_{n\in\mathbb{N}_0}$ with respect to a weight $W$ is reducible if and only if its commutant, given by
        \[
        \{T\in\operatorname{Mat}_n(\mathbb{C})\mid TP_n(t)=P_n(t)T, \, \text{for all }n,t\},
        \]
        contains non-scalar matrices.
    \end{theorem}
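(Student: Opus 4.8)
The plan is to translate reducibility of $\{P_n\}_{n\in\mathbb{N}_0}$ into an algebraic statement about its commutant $\mathcal{C}$, and then to use the positive-definiteness of $W$ to equip $\mathcal{C}$ with an involution; the structure theory of $\ast$-algebras of matrices then finishes the argument. One implication is immediate: if $\{P_n\}_{n\in\mathbb{N}_0}$ reduces, say $M^{-1}P_n(t)M$ is diagonal for all $n$ and $t$, then any non-scalar diagonal matrix $E$ commutes with every $M^{-1}P_n(t)M$, so $MEM^{-1}$ is a non-scalar element of $\mathcal{C}$.

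For the converse I would first describe $\mathcal{C}$ through the three-term recurrence \eqref{MonicRR}. Multiplying \eqref{MonicRR} on the left by a matrix $T$ commuting with all $P_n(t)$, and subtracting \eqref{MonicRR} multiplied on the right by $T$, gives the polynomial identity $P_n(t)(TB_n-B_nT)+P_{n-1}(t)(TA_n-A_nT)=0$; comparing the coefficient of $t^{n}$ and then of $t^{n-1}$ (each $P_k$ being monic of degree $k$) yields $TB_n=B_nT$ and $TA_n=A_nT$ for all $n$. Conversely, since $P_0=I$, $P_1=tI-B_0$, and inductively the matrix coefficients of $P_n$ are polynomials in $A_1,\dots,A_{n-1},B_0,\dots,B_{n-1}$, any matrix commuting with all the $A_n$ and $B_n$ commutes with all $P_n(t)$. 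Hence $\mathcal{C}=\{T:TA_n=A_nT \text{ and } TB_n=B_nT \text{ for all }n\}$, a unital subalgebra of the matrix algebra.

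The crucial step is to endow $\mathcal{C}$ with an involution. Write $S_n=\langle P_n,P_n\rangle_W>0$. From $A_n=S_{n-1}^{-1}S_n$ one gets $S_{n-1}A_n=S_n$ Hermitian, hence $A_n^{*}=S_{n-1}A_nS_{n-1}^{-1}$; the assumption that $S_nB_n$ is Hermitian gives $B_n^{*}=S_nB_nS_n^{-1}$; and pairing $tP_n$ with $P_{n+1}$ and using the symmetry of multiplication by $t$ gives $S_{n+1}=A_{n+1}^{*}S_n$. Fix $T\in\mathcal{C}$ and set $U_n:=S_n^{-1}T^{*}S_n$. Taking adjoints in $TA_{n+1}=A_{n+1}T$ shows $T^{*}$ commutes with $A_{n+1}^{*}$, whence $U_{n+1}=S_n^{-1}(A_{n+1}^{*})^{-1}T^{*}A_{n+1}^{*}S_n=S_n^{-1}T^{*}S_n=U_n$; thus all $U_n$ equal a single matrix $U=S_0^{-1}T^{*}S_0$. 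Combining this with $A_n^{*}=S_{n-1}A_nS_{n-1}^{-1}$, $B_n^{*}=S_nB_nS_n^{-1}$, and the fact that $T^{*}$ commutes with each $A_n^{*}$ and $B_n^{*}$, one checks that $U$ commutes with every $A_n$ and $B_n$, i.e.\ $U\in\mathcal{C}$. Hence $T\mapsto S_0^{-1}T^{*}S_0$ is a conjugate-linear, anti-multiplicative map of $\mathcal{C}$ into itself squaring to the identity; equivalently, $\widetilde{\mathcal{C}}:=S_0^{1/2}\,\mathcal{C}\,S_0^{-1/2}$ is a unital $\ast$-subalgebra of the matrix algebra for the usual conjugate transpose.

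To conclude, recall that a unital $\ast$-subalgebra of a matrix algebra is, after a unitary change of basis, a direct sum $\bigoplus_i\operatorname{Mat}_{d_i}(\mathbb{C})\otimes I_{m_i}$; if it is not $\mathbb{C}I$ it contains a non-trivial self-adjoint idempotent $e$, so $E:=S_0^{-1/2}eS_0^{1/2}\in\mathcal{C}$ is a non-trivial idempotent. Choosing an invertible $M$ with $M^{-1}EM$ a coordinate projection, the relation $EP_n(t)=P_n(t)E$ forces each $M^{-1}P_n(t)M$ to be block diagonal along $\operatorname{im}E\oplus\ker E$, so $\{P_n\}$ reduces; in the $2\times 2$ case $E$ has rank one, each $M^{-1}P_n(t)M$ is diagonal, its diagonal entries form monic scalar orthogonal polynomial sequences for the positive scalar weights on the diagonal of $M^{*}WM$, and these are classical by Bochner's theorem \cite{Boch29} when the $P_n$ are eigenfunctions of a second-order operator. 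I expect the third step — in particular the identity $U_{n+1}=U_n$ that makes $S_0^{-1}T^{*}S_0$ fall back into $\mathcal{C}$ — to be the only genuine obstacle; the rest is formal or a short computation with the recurrence coefficients.
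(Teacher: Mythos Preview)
The paper does not prove this statement; it is quoted verbatim from \cite[Theorem~4.9]{TZ} and is only invoked (in the easy direction) inside the proof of Theorem~\ref{Teovequ}. Your argument is correct and is essentially the standard route to the result: identify the commutant with the centraliser of the recurrence coefficients $\{A_n,B_n\}$, show it is stable under the $S_0$-adjoint $T\mapsto S_0^{-1}T^{*}S_0$ (your verification that $U_{n+1}=U_n$, using $S_{n+1}=A_{n+1}^{*}S_n$ and $[T^{*},A_{n+1}^{*}]=0$, is exactly the point that needs checking and it is right), and then use the structure of finite-dimensional $\ast$-algebras to produce a non-trivial idempotent that block-diagonalises every $P_n(t)$.

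One small comment on the match with the paper's definitions: the paper's phrasing of ``reduces'' already includes the word ``classical'', which is not part of the purely algebraic statement and is only justified \emph{a posteriori} in the setting of this article (the $P_n$ are eigenfunctions of a second-order operator, so Bochner applies to the scalar diagonal blocks). You handle this correctly by separating the general block-diagonalisation from the additional Bochner step in the $2\times 2$ case.
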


\section{First properties for the operator \texorpdfstring{$D$}{TEXT}}\label{sec3}

This work presents a classification of second-order differential operators
        \begin{equation*}
            D = t I\partial^2 + (C - t U ) \partial - V, \quad t\in(0,\infty),
        \end{equation*}
    where $C,U,V\in \mathbb{C}^{2\times 2}$, that are symmetric with respect to a $2\times 2$ matrix weight $W$, with irreducible monic orthogonal polynomials $\{P_n\}_{n\in\mathbb{N}_0}$. Specifically, we require that $DP_n=P_n\Delta_n$ for all $n\geq 0$, with eigenvalues $\{\Delta_n\}_{n\in\mathbb{N}_0}$ assumed to be complex lower triangular matrices.

    \bigskip
    
    \begin{proposition}\label{UVtriang}
        The matrices $U$ and $V$ are lower triangular and 
        \begin{equation} \label{Deltan}
            \Delta_n = -n U - V, \quad n\geq 0.
        \end{equation}
    \end{proposition}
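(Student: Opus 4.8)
The plan is to read off both claims from the coefficient of the top power $t^n$ in the eigenvalue identity $DP_n=P_n\Delta_n$, i.e.\ in \eqref{EDO} with $F_2(t)=tI$, $F_1(t)=C-tU$ and $F_0=-V$. Write the monic eigenfunction as $P_n(t)=t^nI+t^{n-1}F_n+\cdots$, where the dots denote terms of degree at most $n-2$. Then $tI\partial^2P_n=n(n-1)t^{n-1}I+\cdots$ has degree at most $n-1$, and $C\,\partial P_n=nCt^{n-1}+\cdots$ also has degree at most $n-1$; hence neither contributes to the coefficient of $t^n$ in $DP_n$. The remaining two pieces give $-tU\,\partial P_n=-nU\,t^n+\cdots$ and $-VP_n=-Vt^n+\cdots$, so the coefficient of $t^n$ in $DP_n$ equals $-nU-V$. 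On the right-hand side, since $P_n$ is monic the coefficient of $t^n$ in $P_n\Delta_n$ is exactly $\Delta_n$. Comparing the two gives $\Delta_n=-nU-V$ for every $n\ge 0$, which is \eqref{Deltan}; the case $n=0$ is already transparent, since $P_0=I$ forces $\Delta_0=DP_0=-V$.

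For the triangularity of $U$ and $V$, I would then invoke the standing hypothesis that each $\Delta_n$ is lower triangular. Taking $n=0$ in \eqref{Deltan} gives $V=-\Delta_0$, so $V$ is lower triangular; taking $n=1$ gives $U=-\Delta_1-V=\Delta_0-\Delta_1$, a difference of lower triangular matrices, so $U$ is lower triangular as well.

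I do not expect a genuine obstacle here: the whole argument is a degree count in \eqref{EDO} followed by two substitutions. The only points needing a little care are the verification that the $\partial^2$-term and the $C\partial$-term drop in degree (so that the coefficient of $t^n$ in $DP_n$ is governed solely by $U$ and $V$), and the use of the monicity of $\{P_n\}_{n\in\mathbb{N}_0}$ — guaranteed by the Gram--Schmidt construction recalled in Section \ref{preliminaries} — which is what lets us identify the leading coefficient of $P_n\Delta_n$ with $\Delta_n$ itself rather than with a conjugate of it. The resulting formula for $\Delta_n$ together with the triangularity of $U$ and $V$ is the basic structural input for the subsequent case analysis according to the Jordan form of $U$.
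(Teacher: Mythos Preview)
Your proof is correct and follows exactly the approach of the paper: compare the coefficients of $t^n$ on both sides of $DP_n=P_n\Delta_n$ using monicity to obtain \eqref{Deltan}, and then read off the triangularity of $U$ and $V$ from that of the $\Delta_n$. The paper's version is simply terser, leaving the degree count and the $n=0,1$ substitutions implicit.
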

    
    \begin{proof}
        Assume $P_{n}(t) = \sum_{k=0}^n T_k^n t^k$ with $T_n^n = I$. Now, replace it in the differential equation $DP_{n} = P_{n}\Delta_{n}$ and compare the leading coefficients on both sides of the equality to obtain \eqref{Deltan}.
    \end{proof}
    
    \bigskip

    It can be observed that the sequence of polynomials $P_n$ simultaneously satisfies the equations $DP_n=P_n\Delta_n$ and $(D-v_{22}I)P_n=P_n(\Delta_n-v_{22}I)$. Therefore, we take $v=v_{11}$ and $v_{22}=0$. This yields the explicit matrix form given by
    \begin{equation} \label{problema}
    D=\begin{pmatrix}
        t & 0\\
        0 & t
    \end{pmatrix}\partial^2 +\begin{pmatrix}
        c_{11}-tu_{11} & c_{12}\\
        c_{21}-tu_{21} & c_{22}-tu_{22}
            \end{pmatrix}\partial-\begin{pmatrix}
                    v & 0\\
                    v_{21} & 0
                \end{pmatrix}.
    \end{equation}
    
    \bigskip
    
    \begin{lemma} \label{Sn}
         Let $S_n=\langle P_n,P_n\rangle = \begin{pmatrix}
            s_{11}^n & s_{12}^n\\
            \overline{s_{12}^n} & s_{22}^n
        \end{pmatrix}$ be the square norm of the $n$-th polynomial $P_n$ and denote
        \begin{equation}\label{lambamunu}
        \Delta_{n}=\begin{pmatrix}
                \lambda_n & 0\\
                \nu_n & \mu_n
            \end{pmatrix}=
            \begin{pmatrix}
                -n\,u_{11}-v & 0\\
                -n\, u_{21}-v_{21}& -n\, u_{22}
            \end{pmatrix}.
        \end{equation} 
        The entries $u_{11},u_{22},v$ in \eqref{problema} are real and the following relations hold for all $n\geq 0$
        \begin{equation} 
        \begin{split}\label{nun}
             s^n_{22} \nu_n &= ( \mu_n - \lambda_n ) \overline{ s^n_{12} }\\
            - s^n_{22} (nu_{21}+v_{21})&=(-nu_{22}+nu_{11}+v) \overline{ s^n_{12} }.
        \end{split}
        \end{equation} 
        In particular, 
        \begin{equation} \label{v21}
            v_{21} = -v \frac{  \overline{ s^0_{12} } } { s^0_{22} }.
        \end{equation}
    \end{lemma}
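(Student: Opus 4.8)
The plan is to feed the polynomials $P_n$ themselves into the $W$-symmetry of $D$. Since $DP_n=P_n\Delta_n$ and $\Delta_n$ is a constant matrix, the symmetry identity $\langle DP_n,P_n\rangle_W=\langle P_n,DP_n\rangle_W$ becomes
\[
\Delta_n^*\,S_n = S_n\,\Delta_n,\qquad n\geq 0,
\]
that is, $S_n\Delta_n$ is Hermitian. Here $S_n=\langle P_n,P_n\rangle_W$ is Hermitian and positive definite because $W$ is a weight and $P_n$ is monic (so $\det P_n(t)$ is a nonzero polynomial, hence $P_n(t)^*W(t)P_n(t)$ is positive definite for almost every $t$); in particular $s_{11}^n,s_{22}^n>0$ and $\det S_n=s_{11}^n s_{22}^n-|s_{12}^n|^2>0$.

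First I would compare the entries of $\Delta_n^*S_n=S_n\Delta_n$ using the explicit shapes in \eqref{problema} and \eqref{lambamunu}. The $(2,2)$ entry gives $\overline{\mu_n}\,s_{22}^n=\mu_n s_{22}^n$, hence $\mu_n\in\mathbb{R}$; taking $n=1$ yields $u_{22}\in\mathbb{R}$. The $(2,1)$ entry gives $\mu_n\overline{s_{12}^n}=\lambda_n\overline{s_{12}^n}+s_{22}^n\nu_n$, i.e. $s_{22}^n\nu_n=(\mu_n-\lambda_n)\overline{s_{12}^n}$, which is the first line of \eqref{nun}; substituting $\lambda_n=-nu_{11}-v$, $\mu_n=-nu_{22}$, $\nu_n=-nu_{21}-v_{21}$ gives the second line. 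Finally, putting $n=0$ in $s_{22}^0\nu_0=(\mu_0-\lambda_0)\overline{s_{12}^0}$, with $\nu_0=-v_{21}$, $\mu_0=0$, $\lambda_0=-v$, and dividing by $s_{22}^0>0$ yields \eqref{v21}.

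It remains to see that $\lambda_n$ is real for all $n$ (whence $v=-\lambda_0\in\mathbb{R}$ and $u_{11}=-v-\lambda_1\in\mathbb{R}$), which is the only step that is not pure bookkeeping. The cleanest route: since $S_n\Delta_n$ is Hermitian and $S_n$ is positive definite, the matrix $S_n^{1/2}\Delta_n S_n^{-1/2}=S_n^{-1/2}(S_n\Delta_n)S_n^{-1/2}$ is Hermitian, so $\Delta_n$ is similar to a Hermitian matrix and its eigenvalues---namely $\lambda_n$ and $\mu_n$, as $\Delta_n$ is lower triangular by Proposition \ref{UVtriang}---are real. Alternatively one argues by hand: the $(1,1)$ entry of $\Delta_n^*S_n=S_n\Delta_n$ gives $-\operatorname{Im}(\lambda_n)\,s_{11}^n=\operatorname{Im}(s_{12}^n\nu_n)$, and multiplying $s_{22}^n\nu_n=(\mu_n-\lambda_n)\overline{s_{12}^n}$ by $s_{12}^n$ and using $\mu_n\in\mathbb{R}$ gives $\operatorname{Im}(s_{12}^n\nu_n)=-\operatorname{Im}(\lambda_n)\,|s_{12}^n|^2/s_{22}^n$; combining the two forces $\operatorname{Im}(\lambda_n)\,\det S_n/s_{22}^n=0$, hence $\operatorname{Im}(\lambda_n)=0$. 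I expect this reality argument, together with keeping the conjugates $\overline{s_{12}^n}$ versus $s_{12}^n$ straight in the entrywise comparison, to be the only delicate points; the rest is direct substitution.
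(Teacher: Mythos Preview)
Your proof is correct and follows essentially the same approach as the paper: both derive $\Delta_n^*S_n=S_n\Delta_n$ from the $W$-symmetry of $D$, read off $\mu_n\in\mathbb{R}$ and \eqref{nun} from the $(2,2)$ and $(2,1)$ entries, and then combine the $(1,1)$ entry with the $(2,1)$ relation and positive definiteness of $S_n$ to conclude $\lambda_n\in\mathbb{R}$. Your additional observation that $S_n^{1/2}\Delta_n S_n^{-1/2}$ is Hermitian gives a slightly slicker route to the reality of the eigenvalues, but the entrywise alternative you also record is exactly what the paper does.
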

    
    \begin{proof}
        It is evident that $S_{n}$ is a symmetric positive definite matrix.  Since $D$ is $W$-symmetric,  we have that $\langle DP_n,P_n\rangle=\langle P_n,DP_n\rangle$ and $DP_n=P_n\Delta_n$. Then, it follows that $ \Delta_n^{\ast} S_n = S_n \Delta_n$, i.e.
        \[
        \begin{pmatrix}
        \overline{\lambda_n}s_{11}^n +\overline{s_{12}^n\nu_n}
        & \overline{\lambda_n}s_{12}^n+\overline{\nu_n}s_{22}^n\\
        \overline{\mu_ns_{12}^n} 
        & \overline{\mu_n}s_{22}^n
        \end{pmatrix}=\begin{pmatrix}
            s_{11}^n\lambda_n+s_{12}^n\nu_n
            &s_{12}^n\mu_n\\
            \overline{s_{12}^n}\lambda_n+s_{22}^n\nu_n
            &s_{22}^n\mu_n
        \end{pmatrix}.
        \]
        
        From entry $(2,2)$, we get that that $\mu_{n} \in \mathbb{R}$ for all $n\in\mathbb{N}_{0}$, and from entry $(2,1)$ we obtain $\nu_n=\frac{\overline{s^n_{12}}}{s_{22}^n}(\mu_n-\lambda_n).$
        
        Substitute this expression in entry $(1,1)$ to obtain $ 0=(\lambda_n-\overline{\lambda_n}) \left( s^n_{11}
        -\frac{|s^n_{12}|^2}{s^n_{22}}\right)$.
        As $S_n$ is positive definite, if follows that $\lambda_n\in \mathbb{R}$ for all $n\geq 0$. 

        Now, from \eqref{lambamunu}, we have  $u_{11},u_{22},v\in\mathbb{R}$.
    \end{proof}

    \bigskip
    
    In the rest of the work we denote the $k$-th coefficient of the monic polynomial $P_n(t)$ by $T_k^n$, and its columns by $F_k^n$ and $G_k^n$. We use these expressions to find the coefficients of the recurrence relation for $P_n(t)$ that will be a relevant part of the procedure in the sections to come.

    \begin{lemma} \label{FnGn} 
       Let $\{P_n(t)\}_{n\in\mathbb{N}_0}$ be the sequence of monic orthogonal polynomials satisfying $DP_n=P_n\Delta_n$, where $D$ is given in \eqref{problema}. Then, 
        the coefficients of $P_n(t)$ are defined recursively by
        \begin{equation}\label{Fn}
            \begin{split}
                F^n_n & =   \begin{pmatrix}
                            1\\ 0
                            \end{pmatrix}, \\
                \left(  \lambda_{n} I - \Delta_{k} \right)  F_{k}^{n} & = 
                    \left(  k + 1 \right)  \left( C + k I \right)  F_{k+1}^{n} - \nu_{n} G_{k}^{n}, \quad 0 \leq k \leq n-1;
            \end{split} 
        \end{equation}
        and
        \begin{equation} \label{Gn}
            \begin{split}
                G_n^n & =   \begin{pmatrix}
                            0\\1
                            \end{pmatrix},\\
            \left(  \mu_{n} I - \Delta_{k} \right)  G_{k}^{n} & =\left(  k + 1 \right)  \left(
            C + k I \right)  G_{k+1}^{n}, \quad 0 \leq k \leq n-1. 
            \end{split}    
        \end{equation}
    \end{lemma}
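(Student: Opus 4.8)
The plan is to substitute the expansion $P_n(t)=\sum_{k=0}^n T_k^n t^k$ (with $T_n^n=I$) directly into the differential equation $DP_n=P_n\Delta_n$ for $D$ as in \eqref{problema}, and then compare the coefficients of each power $t^k$. Computing term by term, $tI\,P_n''=\sum_{k}k(k-1)T_k^n t^{k-1}$, $(C-tU)P_n'=\sum_k k\,CT_k^n t^{k-1}-\sum_k k\,UT_k^n t^{k}$, and $-VP_n=-\sum_k VT_k^n t^{k}$. Collecting the coefficient of $t^k$ on both sides (the contributions of $T_{k+1}^n$ coming from the $\partial^2$ and $\partial$ terms combine into the single factor $(k+1)(C+kI)$) gives, for $0\le k\le n-1$,
\[
(k+1)(C+kI)\,T_{k+1}^n-(kU+V)\,T_k^n=T_k^n\Delta_n .
\]

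Next I would invoke \eqref{Deltan}, which says $\Delta_k=-kU-V$, so the displayed identity rewrites as the Sylvester-type relation $T_k^n\Delta_n-\Delta_k T_k^n=(k+1)(C+kI)\,T_{k+1}^n$. Writing $T_k^n=(\,F_k^n\mid G_k^n\,)$ in terms of its columns and using the lower-triangular form $\Delta_n=\bigl(\begin{smallmatrix}\lambda_n&0\\ \nu_n&\mu_n\end{smallmatrix}\bigr)$ from \eqref{lambamunu}, one computes $T_k^n\Delta_n=(\,\lambda_n F_k^n+\nu_n G_k^n\mid \mu_n G_k^n\,)$. Reading off the first column then gives $(\lambda_n I-\Delta_k)F_k^n=(k+1)(C+kI)F_{k+1}^n-\nu_n G_k^n$ and the second column gives $(\mu_n I-\Delta_k)G_k^n=(k+1)(C+kI)G_{k+1}^n$, which are precisely \eqref{Fn} and \eqref{Gn}; the base cases $F_n^n=\bigl(\begin{smallmatrix}1\\0\end{smallmatrix}\bigr)$ and $G_n^n=\bigl(\begin{smallmatrix}0\\1\end{smallmatrix}\bigr)$ follow at once from $T_n^n=I$ (equivalently, the $k=n$ case of the displayed identity recovers $\Delta_n=-nU-V$).

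There is no real obstacle here: the entire argument is a routine comparison of coefficients, so the only care needed is in the index bookkeeping for the power shift and in keeping track of the column decomposition. It is worth emphasizing, however, that the first-column relation for $F_k^n$ still couples to $G_k^n$ through $\nu_n$, whereas the second-column relation for $G_k^n$ is self-contained — a feature that reflects the lower-triangular shape of $\Delta_n$ and that will be exploited repeatedly in the case analysis of Sections \ref{sec4}, \ref{sec5}, and \ref{sec6}.
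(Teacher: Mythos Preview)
Your argument is correct and is exactly the approach the paper takes: the paper's proof is the single sentence ``Substitute $P_n(t)$ in \eqref{problema} and use the fact that $D\,t^kI = k(k-1)t^{k-1}I + k(C-tU)t^{k-1} - Vt^k$,'' and you have simply written out that substitution and coefficient comparison in detail, including the column decomposition that the paper leaves implicit.
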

    
    \begin{proof}
        Substitute $P_n(t)$ in \eqref{problema} and use the fact that $D t^k I = k (k-1) t^{k-1} I + k (C - tU) t^{k-1} - V t^k$ for all $0\leq k\leq n$.
    \end{proof}
    
    \begin{lemma} \label{AnBn}
        Given the sequence of monic polynomials $\{P_n(t)\}_{n\in\mathbb{N}_0}$, the coefficients of the three-term recurrence relation for $P_n(t)$
        \begin{equation}\label{RR3T}
            tP_n(t)=P_{n+1}(t)+P_n(t)B_n+P_{n-1}(t)A_n,
        \end{equation}
        can be expressed as
        \begin{align*}
            B_{n}  &  = T_{n-1}^{n} - T_{n}^{n+1},\quad n \geq 1, \quad B_{0}= - T_{0}^{1}\\
            A_{n}  &  = T_{n-2}^{n} - T_{n-1}^{n+1} - T_{n-1}^{n} B_{n},\quad n \geq 1.
        \end{align*} 
    \end{lemma}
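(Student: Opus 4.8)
The plan is to substitute the explicit expansion $P_n(t)=\sum_{k=0}^{n}T_k^n t^k$, with $T_n^n=I$, into the three-term recurrence relation \eqref{RR3T} and equate the matrix coefficients of each power of $t$. Writing
\[
tP_n(t)=\sum_{k=0}^{n}T_k^n t^{k+1}=\sum_{j=1}^{n+1}T_{j-1}^{n}t^{j},
\]
and
\[
P_{n+1}(t)+P_n(t)B_n+P_{n-1}(t)A_n=\sum_{j=0}^{n+1}T_j^{n+1}t^{j}+\sum_{j=0}^{n}T_j^{n}B_n\,t^{j}+\sum_{j=0}^{n-1}T_j^{n-1}A_n\,t^{j},
\]
one then compares coefficients degree by degree, starting from the top.

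At degree $n+1$ both sides contribute the identity ($T_n^n=I=T_{n+1}^{n+1}$), so that comparison is automatically consistent. At degree $n$ the left side gives $T_{n-1}^{n}$ while the right side gives $T_n^{n+1}+T_n^{n}B_n=T_n^{n+1}+B_n$ (the $A_n$-term reaches only degree $n-1$), which yields $B_n=T_{n-1}^{n}-T_n^{n+1}$ for $n\ge 1$. At degree $n-1$ the left side gives $T_{n-2}^{n}$ and the right side gives $T_{n-1}^{n+1}+T_{n-1}^{n}B_n+T_{n-1}^{n-1}A_n=T_{n-1}^{n+1}+T_{n-1}^{n}B_n+A_n$; since $B_n$ has already been determined this gives $A_n=T_{n-2}^{n}-T_{n-1}^{n+1}-T_{n-1}^{n}B_n$ for $n\ge 1$. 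The case $n=0$ must be handled separately because $P_{-1}=0$ and $P_0=I$: there $tI=(tI+T_0^{1})+B_0$ forces $B_0=-T_0^{1}$.

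This is essentially a routine coefficient-matching computation, so there is no real obstacle; the only points needing a little care are the index bookkeeping at the boundary (checking that the $A_n$-term does not contribute above degree $n-1$, and that the degree-$(n-1)$ comparison requires $n\ge 1$) and keeping track of the fact that the leading coefficients $T_n^{n}$, $T_{n-1}^{n-1}$, $T_{n+1}^{n+1}$ are all the identity, which is what produces the stated signs. One should also note that the comparisons in degrees $\le n-2$ impose no further conditions on $A_n$ and $B_n$: once $\{P_n\}_{n\in\mathbb{N}_0}$ is the genuine sequence of monic orthogonal polynomials those identities hold automatically, so the formulas displayed in the statement are the complete answer.
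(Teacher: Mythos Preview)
Your proof is correct and follows exactly the approach indicated in the paper: substitute $P_n(t)=\sum_{k}T_k^n t^k$ into \eqref{RR3T} and compare the coefficients of $t^{n+1}$, $t^{n}$, and $t^{n-1}$. You have simply written out in full the routine computation that the paper summarizes in one sentence.
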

    
    \begin{proof}
        The proof proceeds by substituting $P_n(t)$ into \eqref{RR3T} and comparing the coefficients of $t^{n+1}$, $t^n$, and $t^{n-1}$ on both sides of the equality.
    \end{proof}
    
    \bigskip

    Based on the findings presented in this section,  we outline a systematic procedure for the identification of $W$-symmetric differential operators $D$. Firstly, the $(n-1)$-th and $(n-2)$-th coefficients of $P_n$ are calculated, which are represented by
    \[
        T_{n-1}^n=
        \begin{pmatrix}
            f_1^{n} & g_1^n\\
            f_2^{n} & g_2^{n}
        \end{pmatrix} \quad \text{and} \quad 
        T_{n-2}^n=
        \begin{pmatrix}
            f_3^{n} & g_3^{n}\\
            f_4^{n} & g_4^{n}
        \end{pmatrix}.
    \]
    In this manner, the coefficients $A_n$ and $B_n$ of the recurrence relation for $P_n$ are determined. Subsequently, the conditions stated in Favard's theorem, namely $S_n=S_{n-1}A_n$ and $S_nB_n$ Hermitian (where $S_n=\langle P_n, P_n\rangle$), are applied to derive constraints on the parameters characterizing $D$. Finally, the weight W is calculated using the symmetry equations detailed in Theorem \ref{symcond}.
    The determination of the coefficients $T_{n-1}^n$ and $T_{n-2}^n$ depends on the invertibility of the matrices $\lambda_nI-\Delta_k$ and $\mu_nI-\Delta_k$ for $k=1,2$. Consequently, we divide the analysis of $D$ into three cases. Due to the operator $D = t I\partial^2 + (C - t U ) \partial - V$ is equivalent to 
        \[
        M^{-1}DM=t I \partial^2 + (M^{-1}CM - t M^{-1}UM ) \partial - M^{-1}VM,
        \]
        for any invertible matrix $M$, we consider separately the cases delineated by the Jordan canonical forms of the matrix $U$:
    
    \[
        U= \begin{pmatrix} 
                    u & 0\\
                    1 & u
                    \end{pmatrix},
            \quad U=\begin{pmatrix} 
                    u & 0\\
                    0 & u
                    \end{pmatrix}\quad
            \text{and}
            \quad
            U=\begin{pmatrix}
                    u_{1} & 0\\
                    0 & u_{2}
                    \end{pmatrix},\quad (u_1\neq u_2).
    \]    
    
\bigskip    

\section{\texorpdfstring{$U$}{TEXT} is a non-diagonal matrix}\label{sec4}    

    This section focuses on the case where the Jordan canonical form of the matrix $U$ is non-diagonal. Specifically, we characterize all $W$-symmetric operators $D$ of the form
    \begin{equation}\label{Dtriang}
        D=tI\partial^2   +   \begin{pmatrix}
                    c_{11} - t u & c_{12}\\
                    c_{21} - t  & c_{22} - t u
                \end{pmatrix} \partial
                -   \begin{pmatrix}
                    v & 0 \\
                    v_{21} & 0
                \end{pmatrix},
    \end{equation}
    where $u,v\in\mathbb{R}$, $v_{21}, c_{ij}\in\mathbb{C}$ and $t\in(0,\infty)$ such that the family of monic orthogonal polynomials $\{P_n(t)\}_{n\in\mathbb{N}_0}$ verifies $DP_n=P_n\Delta_n$ for all $n\geq 0$ with
    \[
    \Delta_n=\begin{pmatrix}
        \lambda_n & 0\\
        \nu_n & \mu_n
    \end{pmatrix}
    =\begin{pmatrix}
        -nu-v & 0\\
        -n-v_{21} & -nu
    \end{pmatrix}.
    \]

    \begin{proposition}\label{v21eq0}
        If the second-order differential operator \eqref{Dtriang} satisfies $DP_n=P_n\Delta_n$, then $v_{21}=0$ and $u,v\neq 0$.
    \end{proposition}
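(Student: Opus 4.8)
The plan is to establish the three assertions in turn, using only Section~\ref{sec3}, elementary ODE theory, and the notion of equivalence recalled in Section~\ref{preliminaries}. For $v\neq0$: if $v=0$ then $\lambda_n=-nu-v=-nu=\mu_n$ for every $n$, so the first identity in \eqref{nun} becomes $s^n_{22}\nu_n=(\mu_n-\lambda_n)\overline{s^n_{12}}=0$; since $S_n$ is positive definite we have $s^n_{22}>0$, hence $\nu_n=0$ for all $n$. But $\nu_n=-n-v_{21}$ is a non-constant affine function of $n$, so it cannot vanish identically — a contradiction. Thus $v\neq0$.

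For $u\neq0$ I would apply both sides of $DP_n=P_n\Delta_n$ to the second standard basis vector $e_2$. Since $e_2$ is constant, $D(P_ne_2)=(DP_n)e_2$, and $\Delta_ne_2=\mu_ne_2$, so the vector polynomial $q_n:=P_ne_2$ — which is the second column of $P_n$, hence of degree exactly $n$ with leading coefficient $e_2$ — satisfies $Dq_n=\mu_nq_n=-nu\,q_n$. If $u=0$, every $q_n$ lies in $\ker D$, i.e.\ is a $\mathbb{C}^2$-valued solution on $(0,\infty)$ of the homogeneous linear second-order system $tq''+(C-tU)q'-Vq=0$; but $tI$ is invertible for $t>0$, so this solution space has dimension $4$, and it cannot contain the infinite linearly independent family $\{q_n\}_{n\geq0}$. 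This contradiction proves $u\neq0$. (Alternatively, when $u=0$ every matrix $\mu_nI-\Delta_k$ is singular, and iterating the recurrence \eqref{Gn} two steps forces a monic quadratic polynomial in $n$ to be constant.)

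For $v_{21}=0$: with $v\neq0$ available, conjugate $D$ by $M=\begin{pmatrix}1&0\\ v_{21}/v&1\end{pmatrix}$. Then $M$ commutes with $U$, so $M^{-1}UM=U$ and the shape \eqref{Dtriang} is preserved; a short computation gives $M^{-1}VM=\begin{pmatrix}v&0\\0&0\end{pmatrix}$, while $C$ becomes the general matrix $M^{-1}CM$ and $M^{-1}\Delta_nM$ stays lower triangular with the same diagonal as $\Delta_n$ (as $M$ is unipotent lower triangular). Hence $(W,D)$ is equivalent to a pair of the same form with $v_{21}=0$, whose monic orthogonal polynomials are $M^{-1}P_nM$ and whose weight $M^*WM$ is again irreducible; so we may assume $v_{21}=0$.

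The main obstacle is $u\neq0$: the recurrences of Section~\ref{sec3} degenerate precisely at $u=0$, so a head-on computation bogs down in compatibility conditions, whereas the eigenfunction/ODE-dimension argument above dispatches it immediately; by comparison $v\neq0$ follows in one line from \eqref{nun}, and $v_{21}=0$ is just a use of the remaining conjugation freedom.
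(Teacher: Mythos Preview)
Your proof is correct, and in one respect it is more complete than the paper's: the paper's own proof never actually establishes $v\neq0$ (it only observes that \emph{if} $v_{21}\neq0$ then $v\neq0$ via \eqref{v21}, which leaves the case $v_{21}=0$, $v=0$ unaddressed). Your one-line argument from \eqref{nun}---that $v=0$ forces $\lambda_n=\mu_n$, hence $\nu_n\equiv0$, contradicting $\nu_n=-n-v_{21}$---cleanly fills that gap.

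For $v_{21}=0$ you and the paper do the same thing: conjugate by a unipotent lower-triangular matrix (your $M$ is a scalar multiple of the paper's). The genuine methodological difference is in $u\neq0$. The paper argues computationally: with $u=0$ it writes out \eqref{Fn} and \eqref{Gn} at $k=n-1$, obtains $g_1^n=-(c_{11}+n-1)$ from the first and $(n-1)g_1^n=n(c_{22}+n-1)$ from the second, and notes that these are incompatible as polynomials in $n$. Your argument is structural: $q_n=P_ne_2$ satisfies $Dq_n=\mu_nq_n$, so if $u=0$ every $q_n$ lies in the solution space of a regular second-order $2\times2$ system on $(0,\infty)$, which is $4$-dimensional---too small to contain an infinite independent family. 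This avoids any coefficient bookkeeping and would generalize immediately to larger matrix sizes; the paper's computation, by contrast, stays entirely within the algebraic framework of Section~\ref{sec3} and yields the explicit relations that feed into the subsequent theorems. Both are valid; yours is more conceptual, theirs more in keeping with the paper's hands-on style.
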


    \begin{proof}
        We may consider $v_{21}=0$ without losing generality in operator $D$. If $v_{21}$ were to have a non-zero value, equation \eqref{v21} would require a non-zero $v$, thereby making the operator \eqref{Dtriang} equivalent to one with $v_{21}=0$, following conjugation by
        $M=\begin{pmatrix}
                    \frac{v}{v_{21}} & 0 \\
                    1 & \frac{v}{v_{21}}
                \end{pmatrix}$.
        
        Now suppose $u=0$. Then equations \eqref{Fn} and \eqref{Gn} for $k=n-1$ become
        \[
            \begin{pmatrix}
            0\\
            (n-1)f_1^{n}-vf_2^{n}
        \end{pmatrix} 
        =
        n\begin{pmatrix}
            c_{11}+n-1+g_1^n\\
            c_{21}+g_2^{n}
        \end{pmatrix}\quad\text{and}\quad
            \begin{pmatrix}
            vg_1^n\\
            (n-1)g_1^n
            \end{pmatrix}=
            n\begin{pmatrix}
            c_{12}\\
            c_{22}+n-1
            \end{pmatrix}.
        \]
        However, it is not possible for the first entry in the left equality and the second entry in the right equality to hold simultaneously for all $n\geq 0$.
    \end{proof}

        \begin{theorem}\label{Teovequ}
            Let $D$ be the differential operator \eqref{Dtriang} with $|v|=|u|$. Then, $D$ is $W$-symmetric for some weight $W$ if, and only if is of the form
            \[
             D=tI\partial^2+\begin{pmatrix}
                   c_{22}+2+\dfrac{2c_{22}}{uc_{21}-c_{22}} -tu & 0\\
                   c_{21}-t & c_{22}-tu
                \end{pmatrix}\partial-\begin{pmatrix}
                    u & 0\\ 0 & 0
                \end{pmatrix},
            \]
            with $u>0$, $c_{22}>0$, $c_{22}>\frac{2c_{21}u}{c_{22}-c_{21}u}$ and $c_{21},c_{22}\in\mathbb{R}$. Moreover, $D$ is symmetric with respect to the irreducible weight
            \[
             W(t)=e^{-tu}\,t^{c_{22}-1}\begin{pmatrix}
                \gamma \, t^{\frac{2c_{21}u}{c_{21}u-c_{22}}}+\left(\dfrac{(c_{22}-c_{21}u)(c_{22}-tu)}{2uc_{22}}\right)^2
                & \dfrac{(c_{22} - c_{21} u) (c_{22} - t u)}{2u c_{22}}\\[1em]
                \dfrac{(c_{22} - c_{21} u) (c_{22} - t u)}{2u c_{22}}
                & 1
            \end{pmatrix}, \quad \gamma>0.
            \]
        \end{theorem}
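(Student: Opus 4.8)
The plan is to follow the systematic procedure outlined at the end of Section~\ref{sec3}, specialized to the non-diagonal case of Section~\ref{sec4}. By Proposition~\ref{v21eq0} we may already assume $v_{21}=0$ and $u,v\neq 0$, so the operator is \eqref{Dtriang} with eigenvalues $\lambda_n=-nu-v$, $\mu_n=-nu$, $\nu_n=-n$. First I would use Lemma~\ref{FnGn} to compute the columns $F^n_{n-1},G^n_{n-1}$ (giving $T^n_{n-1}$) and $F^n_{n-2},G^n_{n-2}$ (giving $T^n_{n-2}$); concretely, the recursions \eqref{Fn}--\eqref{Gn} for $k=n-1$ and $k=n-2$ require inverting $\lambda_nI-\Delta_k$ and $\mu_nI-\Delta_k$, which here are lower triangular with diagonal entries $(k-n)u+(v \text{ or } 0)$ and $(k-n)u$, nonsingular for $k<n$ as long as $u\neq 0$. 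This produces explicit rational-in-$n$ expressions for the eight scalars $f^n_1,f^n_2,g^n_1,g^n_2,f^n_3,f^n_4,g^n_3,g^n_4$ (note $g^n_1$ is forced to be $0$ unless $v=u$, which is precisely why the hypothesis $|v|=|u|$ matters — it is the borderline case where the off-diagonal of $T^n_{n-1}$ need not vanish).

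Next I would feed these into Lemma~\ref{AnBn} to get $B_n=T^n_{n-1}-T^{n+1}_n$ and $A_n=T^n_{n-2}-T^{n+1}_{n-1}-T^n_{n-1}B_n$ as explicit matrices. Then impose the two Favard conditions from Section~\ref{preliminaries}: $S_n=S_{n-1}A_n$ and $S_nB_n$ Hermitian, where $S_n=\langle P_n,P_n\rangle$ is positive definite symmetric. Rather than solving for $S_n$ directly, I would use Lemma~\ref{Sn}: equation \eqref{nun} already ties $\overline{s^n_{12}}$ to $s^n_{22}$ via $s^n_{22}\nu_n=(\mu_n-\lambda_n)\overline{s^n_{12}}$, i.e. $\overline{s^n_{12}}=\frac{-n}{v}s^n_{22}$ (using $\mu_n-\lambda_n=v$). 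Combining this with the $(2,2)$ and $(2,1)$ entries of $S_n=S_{n-1}A_n$ gives a first-order recursion for $s^n_{22}$ and a consistency relation; the $(1,1)$ entry and the Hermitian-of-$S_nB_n$ condition then over-determine the parameters. Here is where the hypothesis $|v|=|u|$ splits into $v=u$ and $v=-u$: I expect the case $v=-u$ to be ruled out (one of the diagonal norm entries would be forced negative, contradicting positive-definiteness), leaving $v=u$, and then the remaining equations collapse to the single constraint $c_{11}=c_{22}+2+\frac{2c_{22}}{uc_{21}-c_{22}}$ together with the reality of $c_{12}$ (forcing $c_{12}=0$) and the inequalities $u>0$, $c_{22}>0$, $c_{22}>\frac{2c_{21}u}{c_{22}-c_{21}u}$ needed for all $S_n$ to be positive definite. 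This pins down the stated form of $D$.

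For the weight, I would solve the symmetry equations of Theorem~\ref{symcond} with $F_2=tI$, $F_1=C-tU$, $F_0=-V$ for the specific $C,U,V$ just found. The first equation $F_2^*W=WF_2$ is automatic since $F_2=tI$. The second, $2(tW)'=W(C-tU)+(C-tU)^*W$, is a first-order linear matrix ODE; writing $W=e^{-tu}t^{c_{22}-1}R(t)$ and substituting reduces it to a system for the entries of $R$, and one checks the $(2,2)$ entry gives $R_{22}=\text{const}$ (normalized to $1$), the off-diagonal gives $R_{12}=R_{21}$ a specific affine function of $t$ (hence the $\frac{(c_{22}-c_{21}u)(c_{22}-tu)}{2uc_{22}}$ term), and the $(1,1)$ entry gives a scalar ODE whose solution is $\gamma\,t^{2c_{21}u/(c_{21}u-c_{22})}$ plus the square of the off-diagonal piece. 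The third symmetry equation and the boundary conditions at $0$ and $\infty$ are then verified to hold under the stated inequalities, which also guarantee $W$ is integrable, positive definite, and has finite moments. Finally I would confirm irreducibility: by Theorems~\ref{opzeroorder}/\ref{Pnreducible} it suffices to show no non-scalar $V_0$ satisfies $W(t)V_0=V_0^*W(t)$ for all $t$; the presence of the non-constant entry $t^{2c_{21}u/(c_{21}u-c_{22})}$ together with the generic off-diagonal makes the commutant scalar. The main obstacle I anticipate is the bookkeeping in the Favard step — extracting, from the two matrix conditions on $A_n,B_n,S_n$, exactly the scalar identity $c_{11}=c_{22}+2+\frac{2c_{22}}{uc_{21}-c_{22}}$ and the correct positivity inequalities, while correctly discarding the $v=-u$ branch; everything downstream (solving the ODEs for $W$, checking boundary terms, irreducibility) is comparatively mechanical.
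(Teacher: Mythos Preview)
Your overall strategy matches the paper's, but you mislocate two of the key deductions, and one of those mislocations is a real gap.

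First, a minor correction: in the $v=u$ branch the conclusion $c_{12}=0$ is not a reality condition. It drops out immediately from the $(1,1)$ entry of the $G$-recursion \eqref{Gn} at $k=n-1$, which reads $(v-u)g_1^n=nc_{12}$; setting $v=u$ forces $c_{12}=0$ while leaving $g_1^n$ \emph{undetermined} by the differential equation (the matrix $\mu_nI-\Delta_{n-1}$ is singular here). So $g_1^n$ is not ``forced to be $0$ unless $v=u$''; rather, for $v=u$ it is a free parameter that persists through $T^n_{n-1}$ and $B_n$.

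The substantive gap is your expectation that the Favard conditions will produce the relation $c_{11}=c_{22}+2+\dfrac{2c_{22}}{uc_{21}-c_{22}}$. In the paper's argument they do not: the Hermiticity of $S_nB_n$ (with $s^n_{12}=-\frac{n}{v}s^n_{22}$ from \eqref{nun}) yields only the reality of $c_{22}$, $g_1^n$, $c_{11}$, $c_{21}$. The paper never computes $A_n$ in the $v=u$ branch. The constraint on $c_{11}$ appears only after passing to the symmetry equations of Theorem~\ref{symcond}: one first solves the $(2,2)$ and $(1,2)$ entries of the first-order equation for $w_{22}$ and $w_{12}$, and then substituting these into the $(1,2)$ entry of the \emph{second-order} symmetry equation forces both the integration constant $k_2=0$ and $c_{11}=c_{22}+\dfrac{2uc_{21}}{uc_{21}-c_{22}}$. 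Your plan to extract this relation from $S_n=S_{n-1}A_n$ would, at minimum, require carrying the free parameters $g_1^n$ through $T^n_{n-2}$ and $A_n$ and then solving simultaneously for $g_1^n$, $s^n_{11}$, and the coefficients; it is not clear this closes, and in any case it is not what the paper does.

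Finally, the $v=-u$ branch is not eliminated by a positivity violation. Here the relation \eqref{relf2} already forces $c_{12}=0$, $c_{22}=c_{11}-1$, $c_{21}=(c_{11}-1)/u$; one then computes $T^n_{n-2}$ and $A_n$ explicitly, and the $(1,2)$ and $(2,2)$ entries of $S_n=S_{n-1}A_n$ give two \emph{different} expressions for $s_{22}^n$ in terms of $s_{22}^{n-1}$ (differing by a factor $\frac{n-1}{n}$), an outright contradiction for $n\ge 2$.
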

          
        \begin{proof} 
        Equations \eqref{Fn} and \eqref{Gn} for $k=n-1$ become
        \begin{equation*}
        T_{n-1}^n=\begin{pmatrix}
            -\frac{n\bigl(c_{11}+n-1+g_1^n\bigr)}{u}& g_1^n\\[.5em]
            f_2^{n} &  \frac{(n-1)g_1^n-n\bigl(c_{22}+n-1\bigr)}{u}
        \end{pmatrix}, \quad n\geq 0,
        \end{equation*}
        where non-diagonal entries verify the equations
        \begin{align}
            (v-u)g_1^n&=nc_{12}, \label{relg1}\\
            \quad (u+v)f_2^{n}&=-n\frac{(n-1)\bigl(c_{11}-1+2g_1^n\bigr)+c_{21}u-nc_{22}}{u}. \label{relf2}
        \end{align}
        Now we compute $B_n=T_{n-1}^n-T_{n}^{n+1}$ and use the fact that $S_nB_n$ is an Hermitian matrix for all $n\geq 0$, where $s_{12}^n=-\frac{ns_{22}^n}{v}$ by identity \eqref{nun}. It is convenient to split the calculations into the two possibilities for $v$. 
        
        First suppose $v=u$. Relation \eqref{relg1} implies $c_{12}=0$. Then
        \[
            B_n=\begin{pmatrix}
            \frac{c_{11}+2n-ng_1^n+(n+1)g_1^{n+1}}{u} & g_1^n-g_1^{n+1}\\[.5em]
            \frac{2n\bigl(c_{11}-c_{22}-1-(n-1)g_1^n+(n+1)g_1^{n+1}\bigr)-c_{22}+c_{21}u}{2u^2} & \frac{c_{22}+2n-ng_1^{n+1}+(n-1)g_1^n}{u}
        \end{pmatrix}, \quad n\geq 0.
        \]
        
        The entry $(2,2)$ of $S_nB_n$ is given by $\frac{s_{22}^n\bigl(c_{22}+2n-g_1^n\bigr)}{u}$. The Hermiticity of this $S_nB_n$ for all $n\geq 0$ implies that $c_{22}\in\mathbb{R}$ and $g_1^n\in\mathbb{R}$ for all $n\geq 1$. 
        
        The entry $(1,1)$ of $S_nB_n$ is
        \[
        s_{11}^n\frac{c_{11}+2n-ng_1^n+(n+1)g_1^{n+1}}{u} +ns_{22}^n\frac{2n\bigl(c_{22}-c_{11}+1+(n-1)g_1^n-(n+1)g_1^{n+1}\bigr)+c_{22}-c_{21}u}{2u^3}.
        \]
        In particular, for $n=0$ it implies 
        $c_{11}\in\mathbb{R}$. Hence, for $n\geq 1$ it follows that $c_{21}\in\mathbb{R}$ as well.

        \

        Now we solve the first and second-order symmetry equations 
        \[
        2(WF_2)'=WF_1+F_1^*W \quad \text{and} \quad (WF_2)''-(WF_1)'-F_0^*W+WF_0=0,
        \]
        where $F_2(t)=tI$, $F_1(t)=C-tU$, $F_0=-V$ and $W(t)=(w_{ij}(t))$ is a symmetric positive definite matrix. These equations become        
        \begin{align}
            0&=\begin{pmatrix}
            tw_{11}'+(tu-c_{11}+1)w_{11}+(t-c_{21})\Re(w_{12})    
            & tw_{12}'+\left(tu-\frac{c_{11}+c_{22}-1}{2}\right)w_{12}+\frac{t-c_{21}}{2}w_{22}\\[.5em]
            tw_{21}'+\left(tu-\frac{c_{11}+c_{22}-1}{2}\right)\overline{w_{12}}+\frac{t-c_{21}}{2}w_{22}
            &tw_{22}'+(tu-c_{22}+1)w_{22}
            \end{pmatrix},\label{edo1}\\
            0&=\begin{pmatrix}
            tw_{11}''+(2-c_{11}+tu )w_{11}'+uw_{11}-(c_{21}-t)w_{12}'+w_{12}
            & tw_{12}''+(2-c_{22}+tu)w_{12}'+2uw_{12}\\[.5em]
            t\overline{w_{12}}''+(2-c_{11}+tu)\overline{w_{12}}'-(c_{21}-t)w_{22}'+w_{22}
            & tw_{22}''+(2-c_{22}+tu)w_{22}'+uw_{22}
        \end{pmatrix}.\label{edo2}
        \end{align}
        
        From the entry $(2,2)$ of \eqref{edo1} we have
        \[
            w_{22}(t)=e^{-tu}t^{c_{22}-1}k_1,
        \]
        where $k_1$, $u$ and $c_{22}$ are positive constants for $w_{22}$ to be integrable and positive.

        If we solve the equation corresponding to the entry $(1,2)$ of \eqref{edo1} we have
        \[
        w_{12}(t)=e^{-tu}t^{\frac{c_{11}+c_{22}-2}{2}}\left(k_2+\frac{k_1}{2}h(t)\right),
        \]
        where $k_2\in\mathbb{C}$ and
        \[
        h(t)=\begin{cases}
            c_{21}\log t-t, & \text{if }c_{11}=c_{22},\\
            -c_{21}/t-\log t, & \text{if }c_{11}=c_{22}+2,\\
            \dfrac{2c_{21}}{c_{22}-c_{11}}t^{\frac{c_{22}-c_{11}}{2}}-\dfrac{2}{c_{22}-c_{11}+2}t^{\frac{c_{22}-c_{11}+2}{2}}, & \text{otherwise}.
        \end{cases}
        \]
        If we replace $w_{12}$ and $w_{22}$ in the entry $(1,2)$ of \eqref{edo2} we deduce that $k_2=0$ and $c_{11}=c_{22}+\frac{2uc_{21}}{uc_{21}-c_{22}}$. 
        
        Now, from the entry $(1,1)$ of \eqref{edo1} we have
        \[
        w_{11}(t)=e^{-tu}t^{c_{22}-1}\left(k_3 t^{\frac{2c_{21}u}{c_{21}u-c_{22}}}+k_1\left(\frac{(c_{22}-c_{21}u)(c_{22}-tu)}{2c_{22}u}\right)^2\right),
        \]
        where $k_3>0$ and $c_{22}>\frac{2c_{21}u}{c_{22}-c_{21}u}$ for $w_{11}$ to be integrable and positive.

        In addition, we can consider without loss of generality $k_1=1$, which gives the weight in the statement.

        Finally, we prove that this weight and its orthogonal monic polynomials $\{P_n\}_{n\in \mathbb{N_{0}}}$ are irreducible. Let $V_0=\begin{pmatrix}
            v_1 & v_2\\v_3&v_4
        \end{pmatrix}$ and suppose $W(t)V_0=V_0^*W(t)$. The entry $(2,2)$ of this equation is $(c_{22} - c_{21} u) (c_{22} - t u)\Im(v_2)=-2u c_{22}\Im(v_4)$,
        which implies $v_2,v_4\in\mathbb{R}$. Now, the entry $(2,1)$ is
        \[
        \frac{(c_{22} - c_{21} u) (c_{22} - t u)}{2u c_{22}}(v_1-v_4)=\left(\gamma t^{\frac{2u c_{21}}{c_{21} u-c_{22} }} + \frac{(c_{22} - c_{21} u)^2 (c_{22} - t u)^2}{4u^2 c_{22}^2 }\right)v_2-v_3.
        \]
        This implies that $v_2=0$, $v_1=v_4$ and $v_3=0$. As $V_0$ is a scalar matrix, by Theorem \ref{opzeroorder} we have that $W$ is irreducible. Now suppose the family $\{P_n\}_{n\in\mathbb{N}_0}$ reduces. Then there is an invertible matrix $M\in \mathbb{C}^{2 \times 2}$ such that 
            \[ 
            M^{-1}P_nM= \begin{pmatrix}
                p_n^1(t) & 0 \\
                0 & p_n^2(t)
            \end{pmatrix},
            \]
        where $p_n^i$ are classical orthogonal polynomials for $i=1,2$. It is straightforward to check that $P_n$ is eigenfunction of the zero-order differential operator $M\begin{pmatrix}
               1 & 0 \\
                0 & 0
            \end{pmatrix}M^{-1}$,
        because
            \[
            \begin{pmatrix}
               1 & 0 \\
                0 & 0
            \end{pmatrix}(M^{-1}P_nM)=(M^{-1}P_nM) \begin{pmatrix}
               1 & 0 \\
                0 & 0
            \end{pmatrix}, \quad n\geq 0.
            \]
        Since $W$ is irreducible, then $M\begin{pmatrix}
               1 & 0 \\
                0 & 0
            \end{pmatrix}M^{-1}$ is scalar. Which is not possible.
            
\

        Now suppose $v=-u$. Relation \eqref{relf2} implies $c_{12}=0$, $c_{22}=c_{11}-1$ and $c_{21}=\frac{c_{11}-1}{u}$. Then
        \[
        B_n=\begin{pmatrix}
            \frac{c_{11}+2n}{u} & 0\\
            f_2^{n}-f_2^{n+1} & \frac{c_{11}+2n-1}{u}
        \end{pmatrix}, \quad n\geq 0.
        \]
        The Hermiticity of 
        \[
        S_nB_n=\begin{pmatrix}
            \frac{s_{11}^n(c_{11}+2n)+ns_{22}^n\bigl(f_2^{n}-f_2^{n+1}\bigr)}{u}
            & &\frac{ns_{22}^n(c_{11}+2n-1)}{u^2}\\[1em]
            s_{22}^n\left(\frac{n(c_{11}+2n)}{u^2}+f_2^{n}-f_2^{n+1}\right) 
            & &\frac{s_{22}^n(c_{11}+2n-1)}{u}
        \end{pmatrix}, \quad n\geq 0,
        \]
        implies that $c_{11}\in\mathbb{R}$ and $f_2^{n+1}=f_2^{n}+\frac{n}{u^2}$.

        Now we use the relation $S_{n}=S_{n-1}A_n$ to prove there is no weight $W$ such that $D$ is $W$-symmetric. We compute the coefficient $T_{n-2}^n$ using Lemma \ref{FnGn} for $k=n-2$
        \[
            T_{n-2}^n=\begin{pmatrix}
                
                \frac{n(n-1)(c_{11}+n-2)(c_{11}+n-1)}{2u^2}
                & 0\\[.5em]
                -n(n-1)\frac{(n-3)(c_{11}+n-1)+2}{2u^3}
                &\frac{n(n-1)(c_{11}+n-3)(c_{11}+n-2)}{2u^2}
            \end{pmatrix}, \quad n\geq 1.
        \]
        Then by Lemma \ref{AnBn} we have
        \[
        A_n=T_{n-2}^n-T_{n-1}^{n+1}-T_{n-1}^nB_n=
        \begin{pmatrix}
            \frac{n(c_{11}+n-1)}{u^2} & 0\\[.5em]
            -\frac{n(4c_{11}+3n-7)}{2u^3} & \frac{n(c_{11}+n-2)}{u^2}
        \end{pmatrix},
        \quad n\geq 1.
        \]

        The entries $(1,2)$ and $(2,2)$ of $S_n=S_{n-1}A_n$ turn respectively into
        \[
        s_{22}^n=\frac{s_{22}^{n-1}(n-1)(c_{11}+n-2)}{u^2} \quad \text{and} \quad s_{22}^n=\frac{s_{22}^{n-1}n(c_{11}+n-2)}{u^2}, 
        \]
        which is a contradiction.
        \end{proof}

        \bigskip

        The remainder of this section is dedicated to an analysis of the case in which $|v|\neq |u|$. We follow a similar procedure to restrict the conditions on $D$ and find the weight $W$. For convenience, we divide subsequent analysis according to whether or not the operator $D$ is triangular.

        \begin{theorem}\label{Teoveq2u}
            Let $D$ be the differential operator \eqref{Dtriang} with $|v|\neq |u|$ and $c_{12}=0$. Then, $D$ is $W$-symmetric with respect to some weight $W$ if and only if is of the form
            \begin{equation*}
            D=tI \partial^2
            +   \begin{pmatrix}
                    c_{21}u+4 - u t  & 0\\
                    c_{21} - t  & c_{21}u - u t 
                \end{pmatrix} \partial
            -   \begin{pmatrix}
                    2u & 0 \\
                    0 & 0
                \end{pmatrix},
            \end{equation*}
            where $u>0$ and $c_{21}>0$. Moreover, $D$ is symmetric with respect to the irreducible weight function
             \[
            W(t)=e^{-tu} t^{c_{21}u-1} 
            \begin{pmatrix}
                t^4 \gamma + \dfrac{(c_{21}-2t)\bigl(c_{21}(c_{21}u+1)+2t(tu-1-c_{21}u)\bigr)}{16(c_{21}u+1)}
                & t \left(\dfrac{1}{2}-\dfrac{t u}{4(c_{21}u+1)}\right)-\dfrac{c_{21}}{4}\\[1em]
                t \left(\dfrac{1}{2}-\dfrac{t u}{4( c_{21}u+1)}\right)-\dfrac{c_{21}}{4}
                & 1
            \end{pmatrix},
            \]
           where $\gamma>\dfrac{u^2}{16(c_{21}u+1)^2}$.
        \end{theorem}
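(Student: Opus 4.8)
The plan is to run the procedure outlined at the end of Section~\ref{sec3}. Since $c_{12}=0$ and, by Proposition~\ref{v21eq0}, $v_{21}=0$, the operator \eqref{Dtriang} and the eigenvalues $\Delta_n$ are lower triangular, so in Lemma~\ref{FnGn} the recursion \eqref{Gn} for the second columns $G_k^n$ decouples from \eqref{Fn}. The hypothesis $|v|\neq|u|$ makes $\lambda_nI-\Delta_{n-1}$ and $\mu_nI-\Delta_{n-1}$ invertible, so \eqref{Gn} with $k=n-1$ gives that $G_{n-1}^n$ has vanishing first entry (and second entry $-n(c_{22}+n-1)/u$); then \eqref{Gn} with $k=n-2$ determines $G_{n-2}^n$, the coefficient matrix $\mu_nI-\Delta_{n-2}$ being invertible precisely when $v\neq2u$. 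Using these and \eqref{Fn} (where $\nu_n=-n$ enters) I would assemble $T_{n-1}^n$ and $T_{n-2}^n$, and then obtain $B_n=T_{n-1}^n-T_n^{n+1}$ and $A_n=T_{n-2}^n-T_{n-1}^{n+1}-T_{n-1}^nB_n$ from Lemma~\ref{AnBn}.

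The crucial point is a dichotomy. If $v\neq2u$, then $G_{n-1}^n$ and $G_{n-2}^n$ have vanishing first entry, hence $T_{n-1}^n$, $T_{n-2}^n$, $B_n$ and $A_n$ are all lower triangular; comparing the $(1,2)$ and $(2,2)$ entries of $S_n=S_{n-1}A_n$ then forces $s_{12}^n/s_{22}^n$ to be independent of $n$, which already at $n=1$ contradicts the identity $s_{12}^n=-\tfrac{n}{v}s_{22}^n$ coming from \eqref{nun}. So no weight exists when $v\neq2u$ (in particular when $v=-2u$, where the resonance in \eqref{Fn} does not affect this argument). If instead $v=2u$, the recursion \eqref{Gn} at $k=n-2$ has a singular coefficient matrix: its compatibility condition is automatically satisfied, but the first entry of $T_{n-2}^n$ survives as a free parameter, so $A_n$ may have a nonzero $(1,2)$ entry and the obstruction disappears. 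I would then carry this parameter through $A_n$ and $B_n$ and impose the two Favard conditions: Hermiticity of $S_nB_n$ forces $c_{11},c_{21},c_{22}\in\mathbb{R}$, and $S_n=S_{n-1}A_n$ together with \eqref{nun} pins down the free parameter and forces $c_{11}=c_{21}u+4$ and $c_{22}=c_{21}u$, while positive-definiteness of $S_n$ forces $u>0$ and $c_{21}>0$. This produces the operator displayed in the statement.

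With $D$ fixed, the weight is recovered by integrating the symmetry equations of Theorem~\ref{symcond} with $F_2=tI$, $F_1=C-tU$, $F_0=-V$. The $(2,2)$ entry of the first-order equation gives $w_{22}(t)=k_1e^{-tu}t^{c_{21}u-1}$ with $k_1>0$; its $(1,2)$ entry then gives $w_{12}$ up to two integration constants, which the $(1,2)$ entry of the second-order equation pins down (one of them to $0$); and its $(1,1)$ entry gives $w_{11}(t)=e^{-tu}t^{c_{21}u-1}\bigl(\gamma t^4+\tfrac{(c_{21}-2t)(c_{21}(c_{21}u+1)+2t(tu-1-c_{21}u))}{16(c_{21}u+1)}\bigr)$ with a free constant $\gamma$. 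Requiring $W$ integrable and positive definite forces $\gamma>\tfrac{u^2}{16(c_{21}u+1)^2}$; normalizing $k_1=1$ yields the stated $W$. Finally, irreducibility is shown exactly as in Theorem~\ref{Teovequ}: from $W(t)V_0=V_0^\ast W(t)$ the $(2,2)$ and $(2,1)$ entries force $V_0$ to be scalar, so $W$ is irreducible by Theorem~\ref{opzeroorder}, and a reduction of $\{P_n\}$ would produce a non-scalar zero-order operator commuting with every $P_n$, impossible by Theorems~\ref{opzeroorder} and \ref{Pnreducible}.

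The main obstacle is the resonant case $v=2u$: one must correctly track the free parameter left by the singular recursion \eqref{Gn} through the (long) expressions for $A_n$ and $B_n$ and show that the Favard conditions force exactly $c_{11}=c_{21}u+4$, $c_{22}=c_{21}u$ and nothing weaker. Conceptually, the heart of the matter is recognizing that the $n$-dependence of $s_{12}^n/s_{22}^n=-n/v$ forced by \eqref{nun} is precisely the obstruction that only a resonance in \eqref{Gn} can remove, which is why $v=2u$ is singled out. The subsequent integration of the symmetry ODEs is routine; the one delicate point there is the positivity threshold $\gamma>u^2/(16(c_{21}u+1)^2)$.
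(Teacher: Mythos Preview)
Your overall strategy is sound and your dichotomy argument for singling out $v=2u$ is correct and, in fact, more elementary than the paper's route. The paper does \emph{not} split on $v=2u$ versus $v\neq 2u$ at the Favard level: it computes only $T_{n-1}^n$ and $B_n$ (never $T_{n-2}^n$ or $A_n$), uses Hermiticity of $S_nB_n$ to obtain $c_{11},c_{22}\in\mathbb{R}$, $c_{22}=c_{21}u$ and $c_{11}=c_{21}u-\tfrac{2v}{u-v}$ for \emph{arbitrary} $v$ with $|v|\neq|u|$, and only afterwards, while integrating the symmetry ODEs, discovers that the $(2,1)$ entry of the second-order equation forces $v=2u$. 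Your observation that for $v\neq 2u$ the second columns $G_{n-1}^n,G_{n-2}^n$ have vanishing first entry, whence $A_n$ is lower triangular and $S_n=S_{n-1}A_n$ makes $s_{12}^n/s_{22}^n$ constant in $n$ (contradicting $\overline{s_{12}^n}/s_{22}^n=-n/v$ from \eqref{nun}), is a clean structural argument that avoids solving any ODE to isolate $v=2u$.

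Two points to tighten. First, once $v=2u$ you plan to carry the free parameter $g_3^n$ through $A_n$ and extract $c_{11}=c_{21}u+4$, $c_{22}=c_{21}u$ from $S_n=S_{n-1}A_n$; but these relations already drop out of Hermiticity of $S_nB_n$ alone (since $B_n$ involves only $T_{n-1}^n$, not $g_3^n$), exactly as in the paper---so the $A_n$ bookkeeping is not needed for the constraints on $C$, only (if at all) to pin down $g_3^n$ a posteriori. Second, your description of the ODE step is slightly off: the $(1,2)$ entry of the first-order symmetry equation is first order, so it produces \emph{one} integration constant for $w_{12}$, and in the paper that constant is fixed not by a second-order entry but by the moment condition $\int_0^\infty w_{12}\,dt=0$ (equivalently $S_0$ diagonal, from \eqref{nun} at $n=0$); the resulting $k_2$ is nonzero. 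Likewise the paper reads off $u>0$, $c_{21}>0$ from integrability and positivity of $w_{22}=k_1e^{-tu}t^{c_{21}u-1}$ rather than from positive-definiteness of $S_n$. None of this invalidates your plan, but it will shorten the computation considerably.
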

    
        \begin{proof}

        By Lemma \ref{FnGn} for $k=n-1$ and Lemma \ref{AnBn} we have
        \[
                T_{n-1}^n=\begin{pmatrix}
                \frac{-n(c_{11}+n-1)}{u}&0 \\[.5em]
                \frac{-n\bigl((n-1)(c_{11}-1)-nc_{22}+c_{21}u\bigr)}{u(u+v)}&  \frac{-n(c_{22}+n-1)}{u}
            \end{pmatrix}, \quad
            B_n=\begin{pmatrix}
                    \frac{c_{11}+2n}{u}
                    &0\\[.5em]
                    \frac{2n(c_{11}-1)-c_{22}(2n+1)+c_{21}u}{u(u+v)}
                    &\frac{c_{22}+2n}{u}
                \end{pmatrix},
        \]
        for all $n\geq 0$. Thus
        \[
            S_nB_n=\begin{pmatrix}
                \frac{s_{11}^n(c_{11}+2n)}{u}-ns_{22}^n\frac{2n(c_{11}-1)-c_{22}(2n+1)+c_{21}u}{uv(u+v)}
                &&-\frac{ns_{22}^n (c_{22}+2n)}{uv}\\[1em]
                \frac{s_{22}^n}{u}\left(\frac{2n(c_{11}-1)-c_{22}(2n+1)+c_{21}u}{u+v}-\frac{n(c_{11}+2n)}{v}\right)
                && \frac{s_{22}^n(c_{22}+2n)}{u}
            \end{pmatrix}, \quad n\geq 0.
        \]
        The Hermiticity of this matrix implies that $c_{11},c_{22}\in\mathbb{R}$, $c_{22}=c_{21}u$ and $c_{11}=c_{21} u - \frac{2v}{u-v}$.

        Now we solve the first and second-order symmetry equations 
        \[
        2(WF_2)'=WF_1+F_1^*W \quad \text{and} \quad (WF_2)''-(WF_1)'-F_0^*W+WF_0=0,
        \]
        where $F_2(t)=tI$, $F_1(t)=C-tU$, $F_0=-V$ and $W(t)=(w_{ij}(t))$ is a symmetric positive definite matrix. These equations become
        \begin{align}
                0&=\begin{pmatrix}
                tw_{11}'+\frac{(t-c_{21})(u-v)u+u+v}{u-v}w_{11}+(t-c_{21})\Re(w_{12})    
                & tw_{12}'+u\frac{(t-c_{21})(u-v)+1}{u-v}w_{12}+(t-c_{21})w_{22}\\[.5em]
                t\overline{w_{12}'}+u\frac{(t-c_{21})(u-v)+1}{u-v}\overline{w_{12}}+(t-c_{21})w_{22}
                &tw_{22}'+\bigl((t-c_{21})u+1\bigr)w_{22}
                \end{pmatrix},\label{edo1v2u}\\
                0&=\begin{pmatrix}
                tw_{11}''+u\frac{(t-c_{21})(u-v)+2}{u-v}w_{11}'+uw_{11}+\bigl((t-c_{21})w_{12}\bigr)'
                & tw_{12}''+\Bigl(\bigl((t-c_{21})u+2\bigr)w_{12}\Bigr)'\\[.5em]
                t\overline{w_{12}}''+u\frac{(t-c_{21})(u-v)+2}{u-v}\overline{w_{12}}'+\overline{w_{12}}(u-v)+\bigl((t-c_{21})w_{22}\bigr)'
                & tw_{22}''+\Bigl(\bigl((t-c_{21})u+2\bigr)w_{22}\Bigr)'
                \end{pmatrix}.\label{edo2v2u}
        \end{align}
        
        From entry $(2,2)$ of \eqref{edo1v2u} we find that 
        \[
        w_{22}(t)=e^{-tu}t^{c_{21}u-1}k_1,
        \]
        where $k_1$, $u$ and $c_{21}$ are positive constants for $w_{11}$ to be integrable and positive. This function verifies the entry $(2,2)$ of \eqref{edo2v2u}.

        If we solve the equation corresponding to entry $(1,2)$ of \eqref{edo1v2u} we have
        \[
        w_{12}(t)=e^{-tu}t^{u\left(c_{21}-\frac{1}{u-v}\right)}\left(k_2+\frac{k_1(u-v)}{2uv}\left(c_{21}ut^{\frac{v}{u-v}}-vt^{\frac{u}{u-v}}\right)\right),
        \]
        where $k_2\in\mathbb{C}$. Since $\int W(t)dt=S_0$ is a diagonal matrix, we have that 
        \[
        0=\int_{0}^{\infty} w_{12}(t)dt= \dfrac{c_{21}k_1(u-v)^2\Gamma(c_{21}u)}{2vu^{c_{21}u+1}}+k_2u^{\frac{v}{u-v}-c_{21}u}\Gamma\left(c_{21}u-\frac{v}{u-v}\right),
        \]
        where $c_{21}u>\frac{v}{u-v}$. Then
        \[
        k_2=-\frac{c_{21}k_1(u-v)^2u^{\frac{u}{v-u}}\Gamma(c_{21}u)}{2v\Gamma\left(c_{21}u-\frac{v}{u-v}\right)}.
        \]
        
        If we substitute $w_{12}$ and $w_{22}$ in the entry $(2,1)$ of \eqref{edo2v2u} we have 
        \[
        0=\frac{e^{-tu}t^{c_{21}u}k_1}{2u}\left(\frac{t^{\frac{2u-v}{v-u}}\Gamma(c_{21}u+1)\bigl(u+c_{21}u(v-u)+t(2u-v)(u-v)\bigr)}{u^{\frac{u}{u-v}}\Gamma\left(c_{21}u-\frac{v}{u-v}\right)}-(u-v)^2\right),
        \]
        which implies $v=2u$.
        
        Finally, from entry $(1,1)$ of \eqref{edo1v2u} we get
        \[
        w_{11}(t)=e^{-tu}t^{c_{21}u-1}\left(t^4k_3+\frac{k_1(c_{21}-2t)\bigl(c_{21}(c_{21}u+1)+2t(tu-1-c_{21}u)\bigr)}{16(c_{21}u+1)}\right),
        \]
        where $k_3\in\mathbb{C}$. Note that there is no loss of generality by taking $k_1=1$. Therefore, we have the integrable weight of the statement.
        
        It can be proved that this weight and the corresponding family of monic polynomials do not reduce, as we did in Theorem \ref{Teovequ}.     
        \end{proof}    

        \begin{theorem}\label{Teoueq2v}
            Let $D$ be the differential operator \eqref{Dtriang} with $|v|\neq |u|$ and $c_{12}\neq 0$. Then, $D$ is $W$-symmetric with respect to some weight $W$ if, and only if is
            \begin{equation*}
            D=tI \partial^2
            +   \begin{pmatrix}
                    2-c_{22} - tu  & \dfrac{u(1-2c_{22})}{2}\\[1em]
                    \dfrac{2c_{22}-3}{2u} - t  & c_{22} - tu 
                \end{pmatrix} \partial
            -   \begin{pmatrix}
                    \dfrac{u}{2} & 0 \\[1em]
                    0 & 0
                \end{pmatrix},
            \end{equation*}
            where $u>0$ and $c_{22}>1/2$. Moreover, $D$ is symmetric with respect to the irreducible weight
            \[
            W(t)=\frac{e^{-tu}}{\sqrt{t}}\begin{pmatrix}
                \dfrac{(\gamma^2+2tu) \cosh\bigl(\gamma\sqrt{2tu}\bigr)-2\gamma\sqrt{2tu}\sinh\bigl(\gamma\sqrt{2tu}\bigr)}{\gamma^2u^2}
                & \dfrac{\gamma\cosh\bigl(\gamma\sqrt{2tu}\bigr)-\sqrt{2tu}\sinh\bigl(\gamma\sqrt{2tu}\bigr)}{\gamma u}\\
                \dfrac{\gamma\cosh\bigl(\gamma\sqrt{2tu}\bigr)-\sqrt{2tu}\sinh\bigl(\gamma\sqrt{2tu}\bigr)}{\gamma u}
                & \cosh\bigl(\gamma\sqrt{2tu}\bigr)
            \end{pmatrix},
            \]
            where $\gamma=\sqrt{2c_{22}-1}$.
        \end{theorem}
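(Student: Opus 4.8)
The plan is to run the procedure of Theorems \ref{Teovequ} and \ref{Teoveq2u}, the new feature being that $c_{12}\neq 0$ makes the off-diagonal symmetry equations couple, which is the source of the hyperbolic weight. Since $|v|\neq|u|$ the matrices $\lambda_nI-\Delta_{n-1}$ and $\mu_nI-\Delta_{n-1}$ are invertible, so Lemma \ref{FnGn} with $k=n-1$ gives $T_{n-1}^n$ in closed form; in particular $g_1^n=\dfrac{nc_{12}}{v-u}$, which is nonzero exactly because $c_{12}\neq 0$ and $v\neq u$. Then $B_n=T_{n-1}^n-T_n^{n+1}$ (Lemma \ref{AnBn}) is explicit and rational in $n$. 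From \eqref{nun} one has $s_{12}^n=-\tfrac n v\,s_{22}^n\in\mathbb R$; imposing that $S_nB_n$ be Hermitian for all $n\geq 0$ (and, where necessary, $S_n=S_{n-1}A_n$) and matching powers of $n$ entrywise first forces $c_{11},c_{12},c_{21},c_{22}\in\mathbb R$ and then algebraic relations tying $c_{11},c_{12},c_{21}$ (and, a priori, still $v$) to $c_{22}$ and $u$; as happened with $v=2u$ in Theorem \ref{Teoveq2u}, the last relation $v=u/2$ is expected to be forced in the next step upon substituting the computed $w_{ij}$ into a higher symmetry equation, after which the relations read $c_{11}=2-c_{22}$, $c_{12}=\tfrac{u(1-2c_{22})}{2}$, $c_{21}=\tfrac{2c_{22}-3}{2u}$.

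For the weight, take $F_2=tI$, $F_1=C-tU$, $F_0=-V$ and write out the first- and second-order symmetry equations of Theorem \ref{symcond} entrywise for a symmetric positive definite $W=(w_{ij})$ (the imaginary part of $w_{12}$ is forced to vanish, as in the previous proofs). \emph{This is the main obstacle:} because $c_{12}\neq 0$, the $(2,2)$ first-order equation reads $tw_{22}'+(1-c_{22}+tu)w_{22}=c_{12}w_{12}$ and no longer determines $w_{22}$ by itself, so instead of solving for $w_{22},w_{12},w_{11}$ in cascade one must solve a coupled linear system for $(w_{12},w_{22})$. Reducing it yields a linear ODE for $w_{22}$ solved by $e^{-tu}t^{-1/2}\cosh\!\big(\gamma\sqrt{2tu}\big)$ and $e^{-tu}t^{-1/2}\sinh\!\big(\gamma\sqrt{2tu}\big)$, with $\gamma^2=2c_{22}-1$ (whence $c_{22}>1/2$ for $\gamma$ real); integrability on $(0,\infty)$, positive definiteness, and the remaining symmetry equations (which is also where $v=u/2$ should surface) select $w_{22}=e^{-tu}t^{-1/2}\cosh(\gamma\sqrt{2tu})$ up to a positive scalar and then pin down $w_{12}$, after which the $(1,1)$ first-order equation gives $w_{11}$ by a single integration, the integration constant fixed by the second-order equations and the overall scale normalized to the matrix displayed. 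One also computes $\det W=\dfrac{2e^{-2tu}}{\gamma^2u}>0$ on $(0,\infty)$, so --- in contrast to the first two families --- no extra lower bound on a parameter is needed, and one checks the boundary conditions $\lim_{t\to 0^+}WF_2=\lim_{t\to\infty}WF_2=0$ and the analogues for $WF_1-F_1^*W$.

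Finally, irreducibility follows as in Theorem \ref{Teovequ}: if $WV_0=V_0^*W$ for some $V_0$, the $(2,2)$ entry forces the off-diagonal and $(2,2)$ entries of $V_0$ to be real and then the $(2,1)$ entry forces $V_0$ to be scalar, so $W$ is irreducible by Theorem \ref{opzeroorder}; and if $\{P_n\}$ reduced, its commutant would contain a non-scalar matrix, i.e.\ $P_n$ would be a common eigenfunction of a non-scalar zero-order operator, which is impossible for an irreducible weight (Theorems \ref{opzeroorder} and \ref{Pnreducible}). For the converse (``if'') direction one substitutes the explicit $D$ and $W$ of the statement into the three symmetry equations of Theorem \ref{symcond} together with the boundary conditions and verifies them directly, which completes the characterization.
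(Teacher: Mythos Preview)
Your outline follows the paper's overall strategy (Favard conditions on $B_n,A_n$ to constrain the coefficients, then the symmetry equations of Theorem~\ref{symcond} to build $W$, then irreducibility via Theorems~\ref{opzeroorder} and~\ref{Pnreducible}), and the ODE mechanism you describe for the weight is essentially what the paper does. Two points, however, do not match the paper and deserve correction.

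First, the relation $v=u/2$ does \emph{not} arise from the symmetry equations for $W$; it already falls out of the Favard analysis. Concretely, the $(2,2)$ entry of $S_nB_n$ equals $\frac{s_{22}^n}{u}\bigl(c_{22}+2n+\frac{nc_{12}(2v-u)}{v(u-v)}\bigr)$, whose reality for all $n$ forces $c_{22}\in\mathbb R$ and $\Im(c_{12})(2v-u)=0$. The paper then starts from $v=u/2$ (noting that starting from $c_{12}\in\mathbb R$ is analogous and leads to the same conclusions) and uses $S_n=S_{n-1}A_n$: comparing the leading coefficients of two cubic polynomials in $n$ coming from the off-diagonal entries of $S_{n-1}A_n$ yields $c_{12}=\frac{u(c_{11}-c_{22}+1)}{2}$, and the remaining coefficients give $c_{21}=\frac{2c_{22}-3}{2u}$ and $c_{11}=2-c_{22}$. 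Your analogy with Theorem~\ref{Teoveq2u} (where $v=2u$ was indeed extracted from the second-order symmetry equation) is therefore misplaced here: all constraints on $D$ are settled \emph{before} one writes the symmetry equations.

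Second, on the weight side, the paper does not solve a coupled system for $(w_{12},w_{22})$ directly. It first shows $\Im(w_{12})=0$, then uses the $(2,2)$ entry of the first-order equation to write $w_{12}$ as a linear combination of $w_{22}$ and $w_{22}'$, substitutes this into the $(2,1)$ entry of the second-order equation, and obtains a single \emph{third}-order linear ODE for $w_{22}$. Its integrable solutions form a two-parameter family built from $e^{\pm\sqrt{2(2c_{22}-1)tu}}t^{-1/2}$; the remaining constant is fixed not by the symmetry equations but by the condition $\int_0^\infty w_{12}\,dt=0$, which holds because $S_0$ is diagonal (recall $\nu_0=0$, hence $s_{12}^0=0$ by~\eqref{nun}). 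Only then is $w_{11}$ obtained from the $(1,1)$ first-order equation, with its own integration constant killed by the second-order equations. Your determinant check and the irreducibility argument are fine.
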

        
    \begin{proof}
        By Lemma \ref{FnGn} with $k=n-1$ we obtain
            \begin{equation*}
                T_{n-1}^n=\begin{pmatrix}
                n\left(\frac{c_{11}-n+1}{u}+\frac{nc_{12}}{u(u-v)}\right)
                &\frac{nc_{12}}{v-u} \\[.5em]
                n\left(\frac{2n(n-1)c_{12}}{u(u^2-v^2)} -\frac{n(c_{11}-c_{22}-1)-c_{11}+c_{21}u+1}{u(u+v)}\right)
                &  -n\left(\frac{c_{22}+n-1}{u}+\frac{(n-1)c_{12}}{u(u-v)}\right)
            \end{pmatrix}, \quad n\geq 0.
            \end{equation*}
        Thus, by Lemma \eqref{AnBn} we have
            \begin{equation*}
                B_n=\begin{pmatrix}
                    \frac{c_{11}+2n}{u}-\frac{(2n+1)c_{12}}{u(u-v)}
                    &\frac{c_{12}}{u-v}\\[.5em]
                    -\frac{2n(c_{22}-c_{11}+1)+c_{22}-c_{21}u}{u(u+v)}-\frac{2n(3n+1)c_{12}}{u(u^2-v^2)}
                    & \frac{c_{22}+2n}{u}+\frac{2nc_{12}}{u(u-v)}
                \end{pmatrix}, \quad n\geq 0.
            \end{equation*}
        The entries of $S_nB_n$ are
        \begin{align*}
            (1,1): & \quad\dfrac{s_{11}^n}{u}\left(c_{11}+2n-\frac{c_{12}(2n+1)}{u-v}\right)+\frac{ns_{22}^n}{uv}\left(\frac{c_{22}(2n+1)-2n(c_{11}-1)-c_{21}u}{u+v}+\frac{2nc_{12}(3n+1)}{u^2-v^2}\right),\\
            (1,2):&\quad \dfrac{c_{12}s_{11}^n}{u-v}-\dfrac{ns_{22}^n}{uv}\left(c_{22}+2n+\dfrac{c_{12}2n}{u-v}\right),\\
            (2,1):&\quad
                 \frac{s_{22}^n}{u}\Biggl(\frac{2n(c_{11}-1)-c_{22}(2n+1)+c_{21}u}{u+v}-\frac{2nc_{12}(3n+1)}{u^2-v^2}-\frac{n}{v}\left(c_{11}+2n-\frac{c_{12}(2n+1)}{u-v}\right)\Biggr),\\
            (2,2):&\quad \dfrac{s_{22}^n}{u}\left(c_{22}+2n+\dfrac{nc_{12}(2v-u)}{v(u-v)}\right).
        \end{align*}

        For $n=0$, the diagonal entries imply that $\Im(c_{12})=(u-v)\Im(c_{11})$ and $c_{22}\in\mathbb{R}$. For $n\geq 1$, they imply $\Im(c_{21})=\frac{2\Im(c_{12})}{u(u-v)}\left(3n^2-\frac{s_{11}^nv(u+v)}{s_{22}^n}\right)$ and one of the following conditions: $c_{12}\in\mathbb{R}$ or $v=u/2$. Both conditions hold, yet we will exclusively verify them by assuming $v=u/2$ as our starting point, because the analysis starting from $c_{12}\in\mathbb{R}$ is analogous.

        \

        Suppose $v=u/2$. The entries $(1,2)$ and $(2,1)$ lead to the following relation between the diagonal entries of $S_n$:
        \begin{equation}\label{s11ueq2v}
            s_{11}^n=\frac{s_{22}^n\Bigl( 12\overline{c_{12}}n^2+\bigl(2c_{12}+u(c_{22}-c_{11}-2)\bigr)n +u(c_{21}u-c_{22})\Bigr)}{3u^2\overline{c_{12}}}, \quad n\geq 0.
        \end{equation}

        Next we use the fact that $S_n=S_{n-1}A_n$ for all $n\geq 1$. For this, we take  $k=n-2$ in Lemma \ref{FnGn} to obtain $T_{n-2}^n=\begin{pmatrix}
            f_3^{n} & g_3^{n}\\
            f_4^{n} & g_4^{n}
        \end{pmatrix}$, where
        \begin{align*}
            f_3^{n}&=-\dfrac{(n-1)\bigl((c_{11}+n-2)f_1^{n}+c_{12}f_2^{n}\bigr)+ng_3^{n}}{2u},\\
            f_4^{n}&=2\frac{(n-2)f_3^{n}-ng_4^{n}-(n-1)\bigl(c_{21}f_1^{n}+(c_{22}+n-2)f_2^{n}\bigr)}{5u},\\
            g_3^{n}&=\dfrac{2n(n-1)c_{12}}{3u^2}\left(\frac{2(n-1)c_{12}}{u}+2c_{11}+c_{22}+3n-5\right),\\
            g_4^{n}&=\frac{(n-2)g_3^{n}}{2u}+\frac{n(n-1)}{u^2}\Biggl(c_{12}c_{21}+(c_{22}+n-2)\left(\frac{(n-1)c_{12}}{u}+\frac{c_{22}+n-1}{2}\right)\Biggr),
            \end{align*}
        and $f_1^{n},f_2^{n}$ denote respectively the entries $(1,1),(2,1)$ of $T_{n-1}^n$.
        
        Now, we calculate $A_n=T_{n-2}^n-T_{n-1}^{n+1}-T_{n-1}^nB_n$, for all $n\geq 1$. The entries $(1,2)$ and $(2,1)$ of $S_{n-1}A_n$ are given by $s_{22}^{n-1}p(n)$ and $s_{22}^{n-1}q(n)$, where $p,q$ are third-degree polynomials in $n$, whose leading coefficients are respectively
        \[
        \frac{8c_{12}^2+4uc_{12}(c_{22}-c_{11}+1)-6u^2}{3u^2} \quad \text{and}\quad \frac{8c_{12}^2+4uc_{12}(c_{22}-c_{11}+1)-10u^2}{5u^2}.
        \]
        Due to the symmetry of $S_n$, we have $c_{12}=\frac{u(c_{11}-c_{22}+1)}{2}$. The previous conditions $\Im(c_{11})=2\Im(c_{12})$ and $\Im(c_{21})=12\Im(c_{12})\left(\frac{n^2}{u^2}-\frac{s_{11}^n}{4s_{22}^n}\right)$ imply that $c_{11},c_{21}\in\mathbb{R}$.
        
        Comparing the other coefficients of the polynomials $p,q$, we derive $c_{21}=\frac{2c_{22}-3}{2u}$, $c_{11}=2-c_{22}$ and $c_{12}=\frac{u(1-2c_{22})}{2}$. Also, \eqref{s11ueq2v} implies $c_{22}>1/2$.          

            \bigskip
       
            Now, we look for a symmetric positive definite matrix $W(t)$ that verifies the first and second-order symmetry equations
            \[
            2(WF_2)'=WF_1+F_1^*W 
            \quad \text{and}\quad 
            (WF_2)''=(WF_1)'+F_0^*W-WF_0,
            \]
            where $F_2(t)=tI$, $F_1(t)=C-tU$ and $F_0=-V$. Considering $W(t)=e^{-tu} \begin{pmatrix}
            w_{11}(t) & w_{12}(t) \\
             \overline{w_{12}}(t) & w_{22}(t)
            \end{pmatrix}$, the first equation turns into
            \begin{equation}           
             \label{edo1vu/2} 
            0=\begin{pmatrix}
                    2t w'_{11}+2(c_{22}-1) w_{11}+2\left(t+\frac{3-2c_{22}}{2u}\right) \Re(w_{12})    
                    & 2t w'_{12}+\left(t+\frac{3-2c_{22}}{2u}\right) w_{22}+\left(c_{22}-\frac{1}{2}\right)w_{11} \\[.5em]
                    2t \overline{w'_{12}}+\left(t+\frac{3-2c_{22}}{2u}\right) w_{22}+\left(c_{22}-\frac{1}{2}\right)w_{11}
                    & 2t w'_{22}+2(1-c_{22}) w_{22}+(2c_{22}-1) \Re(w_{12})
                    \end{pmatrix},
                    \end{equation}
            and the second equation leads to        
              \begin{align}
                 \label{edo2vu/2_11}
                      t w''_{11}+(c_{22}-tu) w'_{11}+u(1-c_{22})w_{11}+\left(t+\frac{3-2c_{22}}{2u}\right) w'_{12}+\left(c_{22}-\frac{1}{2}-tu\right)w_{12} =0,  \\ \label{edo2vu/2_12}  
                     t w''_{12}+(2-c_{22}-tu) w'_{12}+u\left(c_{22}-\frac{1}{2}\right)w_{12}+u\left(c_{22}-\frac{1}{2}\right) w'_{11}+u^2\left(\frac{1}{2}-c_{22}\right)w_{11}=0,  \\ \label{edo2vu/2_21}
                     t \overline{w''_{12}}+(c_{22}-tu)\overline{w'_{12}}+u\left(\frac{1}{2}-c_{22}\right) \overline{w_{12}}+\left(t+\frac{3-2c_{22}}{2u}\right)w'_{22}+\left(c_{22}-\frac{1}{2}-tu\right)w_{22}=0, \\
                    \label{edo2vu/2_22} t w''_{22}+(2-c_{22}-tu) w'_{22}+u(c_{22}-1) w_{22}+u\left(c_{22}-\frac{1}{2}\right)\overline{w'_{12}}+u^2\left(\frac{1}{2}-c_{22}\right)\overline{w_{12}}=0. \end{align}      
    
            Since $w_{11}$ and $w_{22}$ are real, the entry $(1,2)$ of \eqref{edo1vu/2} implies that $\Im(w_{12})$ is constant. The imaginary part of \eqref{edo2vu/2_11} leads to $\Im(w_{12})=0$. Then $w_{12}$ is real.
                  
            From entry $(2,2)$ of \eqref{edo1vu/2} we have the equation $w_{12}(t)=\dfrac{2\bigl((c_{22}-1)w_{22}-t w'_{22}\bigr)}{u(2c_{22}-1)}$. Replace it in \eqref{edo2vu/2_21} to get the following third-order differential equation
            \[
            4t^2 w'''_{22}-4t(tu-3)w''_{22}+\bigl(3-4(c_{22}+1)tu\bigr)w'_{22}+u(2c_{22}-1)(2tu-1)w_{22}=0,
            \] 
            whose integrable solution is
            \begin{equation*}
            w_{22}(t)=e^{-\sqrt{2(2c_{22}-1)tu}}t^{-1/2}\left(k_1e^{2\sqrt{2(2c_{22}-1)tu}}-\frac{k_2}{\sqrt{2(2c_{22}-1)u}}\right),
            \end{equation*}
            where $k_1,k_2\in\mathbb{R}$ and $u>0$. Then we have
            \[
            w_{12}(t)=e^{-\sqrt{2(2c_{22}-1)tu}}t^{-1/2}
            \left(k_1e^{2\sqrt{2(2c_{22}-1)tu}}\frac{\sqrt{2c_{22}-1}-\sqrt{2tu}}{u\sqrt{2c_{22}-1}}-k_2\frac{\sqrt{2c_{22}-1}+\sqrt{2tu}}{\sqrt{2u^3}(2c_{22}-1)}\right).
            \]
            Since $S_0=\int W(t)dt$ is diagonal, then
            \[
            0=\int_{0}^{\infty} w_{12}(t)dt=-\frac{2c_{22}\bigl(k_2+k_1\sqrt{2u(2c_{22}-1)}\bigr)}{u^2(2c_{22}-1)^2}.
            \]            
            Hence $k_2=-k_1\sqrt{2u(2c_{22}-1)}$. 
            
            Subsequently, solving the differential equation for $w_{11}$ of the entry $(1,1)$ of \eqref{edo1vu/2} we have
            \begin{equation*}
                w_{11}(t)=k_3t^{1-c_{22}}+2k_1\frac{\cosh \bigl(\sqrt{2(2c_{22}-1)tu}\bigr)(2c_{22}-1+2tu)-2\sinh\bigl(\sqrt{2(2c_{22}-1)tu}\bigr)\sqrt{2(2c_{22}-1)tu}}{u^2(2c_{22}-1)\sqrt{t}}.
            \end{equation*}
            Finally, this function verifies the first and second-order differential equations if $k_3=0$. No generality is lost if we take $k_1=1/2$, which gives the matrix weight of the statement.
            
            Following the idea of Theorem \ref{Teovequ} it can be proved that this weight and the corresponding family of monic polynomials do not reduce.     
    \end{proof}
    
    \bigskip
     \section{\texorpdfstring{$U$ is a scalar matrix}{TEXT}}\label{sec5}

    In this section, we consider the case in which the Jordan canonical form of $U$ is scalar. In this context, we identify  all the $W$-symmetric differential operators $D$ of the form:    \begin{equation}\label{Desc}
            D=tI \partial^2
            +   \begin{pmatrix}
                    c_{11} - u t  & c_{12}\\
                    c_{21}  & c_{22} - u t 
                \end{pmatrix} \partial
            -   \begin{pmatrix}
                    v & 0 \\
                    v_{21} & 0
                \end{pmatrix},
        \end{equation}
    where $u,v\in\mathbb{R}$, $v_{21}, c_{ij}\in\mathbb{C}$ and $t\in(0,\infty)$, such that the family of monic orthogonal polynomials $\{P_n\}_{n\in\mathbb{N}_0}$ verifies $DP_n=P_n\Delta_n$ for all $n\geq 0$, where
    \[
        \Delta_n=\begin{pmatrix}
            \lambda_n & 0\\
            \nu_n & \mu_n
        \end{pmatrix}
        =\begin{pmatrix}
            -nu-v & 0\\
            -v_{21} & -nu
        \end{pmatrix}.
    \]

    \begin{proposition}
    If the second-order differential operator \eqref{Desc} satisfies $DP_n=P_n\Delta_n$, then $v_{21}=0$, $u\neq 0$ and $v>0$.
    \end{proposition}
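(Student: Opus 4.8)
The plan is to prove the three assertions one at a time, reusing the recurrences of Section~\ref{sec3}.

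\emph{Reduction to $v_{21}=0$.} This step is identical to the first step of Proposition~\ref{v21eq0}: by \eqref{v21} a nonzero $v_{21}$ forces $v\neq 0$, and then conjugating $D$ by $M=\left(\begin{smallmatrix} v/v_{21} & 0\\ 1 & v/v_{21}\end{smallmatrix}\right)$ --- which commutes with the scalar matrix $U=uI$ and hence preserves the form \eqref{Desc} --- yields an equivalent operator with $v_{21}=0$. So from now on I assume $v_{21}=0$; then $\Delta_n=\operatorname{diag}(-nu-v,\,-nu)$ is diagonal and $\nu_n=0$, and by Lemma~\ref{Sn} the numbers $u,v$ are real.

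\emph{$u\neq 0$.} Suppose $u=0$, so $\lambda_n=-v$ for all $n$ and $\lambda_nI-\Delta_{n-1}=0$. Evaluating \eqref{Fn} at $k=n-1$ and using the known value of $F_n^n$, the first component of the $F$-equation reduces to $0=n(c_{11}+n-1)$ (and, from \eqref{Gn}, the second component of the $G$-equation reduces to $0=n(c_{22}+n-1)$), which cannot hold for all $n\geq 1$. Hence $u\neq 0$.

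\emph{$v\neq 0$.} Suppose $v=0$; then $V=0$, so $D=tI\partial^2+(C-tuI)\partial$ and $\Delta_n=-nu\,I$ is scalar. From Lemma~\ref{FnGn} one gets $T_{n-1}^n=-u^{-1}n\,(C+(n-1)I)$, hence by Lemma~\ref{AnBn} one has $B_n=u^{-1}(C+2nI)$. Imposing that $S_nB_n$ be Hermitian (with $S_n$ Hermitian positive definite) yields $S_nC=C^*S_n$ for all $n$; in particular $S_0C=C^*S_0$, so $S_0^{1/2}CS_0^{-1/2}$ is Hermitian and $C$ is diagonalizable. Choosing $M$ with $M^{-1}CM=\operatorname{diag}(c_1,c_2)$, the equivalent operator $M^{-1}DM=tI\partial^2+(\operatorname{diag}(c_1,c_2)-tuI)\partial$ is diagonal with the scalar eigenvalue $-nuI$; solving $(M^{-1}DM)(M^{-1}P_nM)=(M^{-1}P_nM)(-nuI)$ column by column, each entry of $M^{-1}P_nM$ is either $0$ or a scalar Laguerre-type polynomial of degree $n$ --- these being, since $u\neq 0$, the only polynomial solutions of $t y''+(c_i-tu)y'+nuy=0$ --- and comparing leading coefficients forces $M^{-1}P_nM$ to be diagonal. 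Thus $\{P_n\}$ reduces, contradicting the irreducibility assumption (Theorem~\ref{Pnreducible}). Hence $v\neq 0$.

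\emph{$v>0$.} Since $v$ is a nonzero real, \eqref{nun} (with $\nu_n=0$ and $\mu_n-\lambda_n=v\neq 0$) forces $s_{12}^n=0$, so every $S_n$ is diagonal; then $S_n=S_{n-1}A_n$ forces $A_n$ diagonal with positive diagonal entries $(A_n)_{11}=s_{11}^n/s_{11}^{n-1}$ and $(A_n)_{22}=s_{22}^n/s_{22}^{n-1}$. I would then compute $T_{n-1}^n$ and $T_{n-2}^n$ from Lemma~\ref{FnGn}, and $A_n,B_n$ from Lemma~\ref{AnBn}; the requirement that the off-diagonal entries of $A_n$ vanish (as polynomial identities in $n$) yields the relation $v(c_{11}-c_{22})=2u$, the Hermiticity of $S_nB_n$ yields $c_{11},c_{22}\in\mathbb{R}$ together with $(u^2-v^2)\,c_{12}c_{21}>0$, and feeding these into the positivity of $(A_n)_{11}$ and $(A_n)_{22}$ for all $n$ leaves $v>0$ as the only possibility; along the way one also disposes of the degenerate values $v\in\{u,-u\}$, for which $\lambda_nI-\Delta_k$ or $\mu_nI-\Delta_k$ ($k=1,2$) becomes singular and $C$ is forced into a triangular shape that is handled separately. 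I expect this last part --- tracking the polynomial-in-$n$ identities produced by ``$A_n$ diagonal'' and extracting the sign of $v$ from the accumulated constraints --- to be the main obstacle; the three preceding reductions are short.
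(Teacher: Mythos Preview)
Your reductions to $v_{21}=0$ and $u\neq 0$ match the paper's exactly. For $v\neq 0$ you take a somewhat different route: the paper simply observes that when $v=0$ one has $B_n=u^{-1}(C+2nI)$ and $A_n=u^{-2}n(C+(n-1)I)$, so the non-scalar matrix $T=C-c_{11}I=\left(\begin{smallmatrix}0&c_{12}\\ c_{21}&c_{22}-c_{11}\end{smallmatrix}\right)$ commutes with every $A_n,B_n$ and hence with every $P_n$, invoking Theorem~\ref{Pnreducible} directly. Your argument (diagonalize $C$, then identify each entry of $M^{-1}P_nM$ as a polynomial solution of a scalar Laguerre equation) is correct and reaches the same conclusion, but it does more work than necessary; the commutant trick avoids the ODE discussion entirely.

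The real gap is in your treatment of $v>0$. You read the conclusion ``$v>0$'' as a genuine necessary condition and set out to extract it from the sign constraints produced by positivity of the diagonal entries of $A_n$ together with the polynomial-in-$n$ identities forced by ``$A_n$ diagonal''. This is not what the proposition asserts, and your program would not terminate cleanly: an operator \eqref{Desc} with $v<0$ can be perfectly admissible --- it is just equivalent to one with $v>0$. The paper's argument is a one-line normalization: with $J=\left(\begin{smallmatrix}0&1\\1&0\end{smallmatrix}\right)$ one has
\[
J^{-1}DJ+vI=t\partial^{2}+\begin{pmatrix}c_{22}-tu&c_{21}\\ c_{12}&c_{11}-tu\end{pmatrix}\partial-\begin{pmatrix}-v&0\\0&0\end{pmatrix},
\]
which is again of the form \eqref{Desc} with $v$ replaced by $-v$. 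So ``$v>0$'' is a without-loss-of-generality choice of representative in the equivalence class, not a consequence of positivity of $S_n$ or of $A_n$. Replacing your final paragraph by this conjugation finishes the proof immediately; the elaborate computation you sketch (which anticipates much of the rest of Section~\ref{sec5}) is both unnecessary here and, as you suspected, unlikely to isolate the sign of $v$ on its own.
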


    \begin{proof}
    By reiterating the argument of Proposition 4.1, we derive that $v_{21}=0$.  
    
    Suppose $u=0$. Equation \eqref{Fn} for $k=n-1$ becomes $
     \begin{pmatrix}
                0\\
                -vf_2^{n}
            \end{pmatrix} 
            =
            n\begin{pmatrix}
                c_{11}+n-1\\
                c_{21}
            \end{pmatrix},
    $
    wich is a contradiction since $c_{11}$ is constant.

    Now suppose $v=0$. Equations \eqref{Fn} and \eqref{Gn} for $k=n-1$ and $k=n-2$ lead to the expressions
    \begin{align*}
        T_{n-1}^n = \dfrac{-n(C + (n-1)I)}{u} \quad \text{and}\quad T_{n-2}^n =\dfrac{n(n-1)(C+(n-1)I)(C+(n-2)I)}{2u^2}.
    \end{align*}
        
    Now, by Lemma \ref{AnBn}, we have that $B_n=\frac{C+2nI}{u}$ for all $n\geq 0$ and $A_n=\frac{C+(n-1)I}{u^2}$ for all $n\geq 1$.

        Consider the non-scalar matrix $T=\begin{pmatrix}
               0 &c_{12}\\
                c_{21} & c_{22}-c_{11}
            \end{pmatrix}$. This matrix commutes with $C$. Thus, it commutes with $A_n$ and $B_n$ for all $n$. Consequently, $T$ commutes with $P_n$ for all $n\geq 0$. Now, by Theorem \ref{Pnreducible}, we have that this case reduces. 

           Finally, notice that no generality is lost by considering $v>0$, as 
            \[
        \begin{pmatrix}
            0 & 1\\
            1 & 0
            \end{pmatrix}^{-1} D \begin{pmatrix}
                0 & 1\\
                1 & 0
            \end{pmatrix}+vI=t\partial^2+\begin{pmatrix}
                c_{22}-tu & c_{21}\\
                c_{12} & c_{11}-tu
            \end{pmatrix}\partial-\begin{pmatrix}
                -v & 0\\0 & 0
            \end{pmatrix},
        \]
         remains an equivalent operator.
    \end{proof}
        
 \begin{theorem}\label{Teovequesc}
        Let $D$ be the differential operator \eqref{Desc} with $v=|u|$. Then, $D$ is $W$-symmetric for some weight $W$ if and only if is
        \[
            D=tI\partial^2+\begin{pmatrix}
                   c_{22}+2-tu & 0\\
                   c_{21} & c_{22}-tu
                \end{pmatrix}\partial-\begin{pmatrix}
                    u & 0\\ 0 & 0
                \end{pmatrix},
        \]
        where $c_{22}>0$ and $u>0$. Moreover, $D$ is symmetric with respect to the weight function
        \[
            W(t)=e^{-tu}t^{c_{22}-1}\begin{pmatrix}
           \gamma t^2+\dfrac{|c_{21}|^2(c_{22}-2tu)}{4c_{22}}
           & \dfrac{\overline{c_{21}}(tu-c_{22})}{2c_{22}}\\
           \dfrac{c_{21}(tu-c_{22})}{2c_{22}}
           &1
            \end{pmatrix}, \quad \gamma>\dfrac{|c_{21}|^2u^2}{4c_{22}^2}.
        \]
    \end{theorem}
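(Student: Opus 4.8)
The plan is to apply the procedure laid out after Lemma~\ref{AnBn}, simplified by the fact that in the scalar-$U$ case the eigenvalue is diagonal. By the preceding Proposition, $v_{21}=0$, $u\neq0$ and $v>0$, so $\Delta_n=\operatorname{diag}(-nu-v,-nu)$ is diagonal; moreover $\nu_n=-v_{21}=0$, $\mu_n-\lambda_n=v>0$, and the first identity in \eqref{nun} gives $\overline{s^n_{12}}=0$, whence each $S_n=\langle P_n,P_n\rangle$ and each $A_n=S_{n-1}^{-1}S_n$ is diagonal. Since $v=|u|$ and $v>0$, either $v=u>0$ or $v=-u$ with $u<0$. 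In the latter case, evaluating \eqref{Fn} at $k=n-1$ the singular factor $\lambda_nI-\Delta_{n-1}=\operatorname{diag}(-u,0)$ forces, in its second coordinate, $c_{21}=0$; then the $(1,1)$ entry of the first symmetry equation of Theorem~\ref{symcond} reduces to $tw_{11}'+(ut+1-\Re c_{11})w_{11}=0$, with solutions proportional to $e^{-ut}t^{\Re c_{11}-1}$, which is not integrable on $(0,\infty)$ because $u<0$. This contradicts that $W$ is a weight, so $v=u$ and $u>0$.

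Assume now $v=u>0$. Evaluating \eqref{Gn} at $k=n-1$, the vanishing $(1,1)$ entry of $\mu_nI-\Delta_{n-1}=\operatorname{diag}(0,-u)$ forces $c_{12}=0$ (and leaves the $(1,2)$ entry of $T^n_{n-1}$ free, but this parameter does not enter the equations determining $C,U,V$). With $c_{12}=0$ I would compute $T_{n-1}^n$ and, via Lemma~\ref{AnBn}, $B_n$; imposing that $S_nB_n$ be Hermitian (with $S_n$ real diagonal) gives $c_{11},c_{22}\in\mathbb{R}$. I then turn to the symmetry equations with $F_2=tI$, $F_1=C-tU$, $F_0=-V$ and $W=(w_{ij})$ symmetric positive definite. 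The $(2,2)$ entry of the first equation integrates to $w_{22}=k_1e^{-ut}t^{c_{22}-1}$, which is positive and integrable exactly when $c_{22}>0$ (and $k_1>0$); the $(1,2)$ entry of the first equation then yields $w_{12}=e^{-ut}\bigl(k_2\,t^{(c_{11}+c_{22}-2)/2}+k_1'\,t^{c_{22}-1}\bigr)$ with $k_1'=\overline{c_{21}}k_1/(c_{22}-c_{11})$, a logarithm replacing the last power when $c_{11}=c_{22}$.

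Substituting $w_{12}$ into the $(1,2)$ entry of the second symmetry equation, $tw_{12}''+(2-c_{22}+ut)w_{12}'+2uw_{12}=0$, produces a combination of monomials with exponents $\tfrac{c_{11}+c_{22}-2}{2}$, $\tfrac{c_{11}+c_{22}-2}{2}-1$ and $c_{22}-1$: when these are pairwise distinct (i.e.\ $c_{11}\neq c_{22}$ and $c_{11}\neq c_{22}+2$) linear independence forces $k_1'=0$, hence $c_{21}=0$, and then $k_2=0$, so $w_{12}\equiv0$ and $W$ is diagonal, hence reducible by Theorem~\ref{opzeroorder}, contrary to hypothesis (the exceptional case $c_{11}=c_{22}$ leads to the same conclusion through the logarithmic term). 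Therefore $c_{11}=c_{22}+2$, for which two of the exponents coincide and the equation yields $k_2=\overline{c_{21}}k_1u/(2c_{22})$, $k_1'=-\overline{c_{21}}k_1/2$, i.e.\ $w_{12}=e^{-ut}t^{c_{22}-1}\overline{c_{21}}k_1(ut-c_{22})/(2c_{22})$. Finally, the $(1,1)$ entry of the first symmetry equation, $tw_{11}'+(ut-c_{22}-1)w_{11}=\Re(c_{21}w_{12})$, integrates to $w_{11}=e^{-ut}t^{c_{22}-1}\bigl(\gamma t^2+|c_{21}|^2(c_{22}-2ut)/(4c_{22})\bigr)$; a short computation gives $\det W=k_1^2\bigl(e^{-ut}t^{c_{22}-1}\bigr)^2t^2\bigl(\gamma-|c_{21}|^2u^2/(4c_{22}^2)\bigr)$, so $W$ is positive definite precisely when $\gamma>|c_{21}|^2u^2/(4c_{22}^2)$. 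Normalizing $k_1=1$ gives exactly the operator and weight in the statement, and one checks that the remaining entries of the symmetry equations hold and that the boundary conditions are met — $tW\to0$ at both endpoints because $c_{22}>0$ (and $u>0$ at $\infty$), and $WF_1-F_1^*W=WC-C^*W\to0$ since $W\to0$ exponentially at $\infty$ while the leading matrix $W_0$ of $W$ at $0$ satisfies $W_0C=C^*W_0$.

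For the converse, given $D$ and $W$ of the stated form one verifies directly (using the formula for $\det W$ together with $c_{22}>0$, $u>0$, $\gamma>|c_{21}|^2u^2/(4c_{22}^2)$) that $W$ is a positive-definite integrable weight with finite moments and that $D$ is $W$-symmetric; since $\deg F_i\le i$, the $W$-symmetry then forces $DP_n=P_n\Delta_n$ with $\Delta_n=-nU-V$ lower triangular, and (for $c_{21}\neq0$) $W$ is irreducible by the argument used in the proof of Theorem~\ref{Teovequ}. I expect the main obstacle to be the $(1,2)$-symmetry-equation step: integrating the first-order equation correctly (including the $c_{11}=c_{22}$ case distinction) and recognizing that $c_{11}=c_{22}+2$ is exactly the resonance under which the second-order equation admits a nonzero off-diagonal solution — the condition without which $W$ would collapse to a diagonal, reducible weight; a secondary technical nuisance is the free parameter appearing in $T^n_{n-1}$ because $\mu_nI-\Delta_{n-1}$ is singular when $v=u$.
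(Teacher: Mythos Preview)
Your argument is correct and reaches the same endpoint as the paper, but you derive the key constraint $c_{11}=c_{22}+2$ by a genuinely different mechanism. The paper stays within the three-term recurrence: it computes $T_{n-2}^n$ (hence $A_n$) via Lemma~\ref{FnGn}, and since $S_n$ is diagonal the relation $S_n=S_{n-1}A_n$ forces $A_n$ to be diagonal; the $(2,1)$ entry of $A_n$, namely $\tfrac{nc_{21}(c_{22}-c_{11}+2)}{6u^2}$, then yields $c_{11}=c_{22}+2$ directly from $c_{21}\neq0$. To fix the integration constant in $w_{12}$ the paper uses $\int w_{12}\,dt=0$ (a consequence of $S_0$ diagonal). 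You instead bypass $T_{n-2}^n$ and $A_n$ entirely: after getting $c_{12}=0$ and $c_{11},c_{22}\in\mathbb{R}$ you feed the first-order solution for $w_{12}$ into the $(1,2)$ entry of the \emph{second}-order symmetry equation, and the resonance between the exponents $\tfrac{c_{11}+c_{22}-2}{2}-1$ and $c_{22}-1$ simultaneously forces $c_{11}=c_{22}+2$ and determines $k_2$. Your route is more economical (no $A_n$ computation, no moment integral), while the paper's route handles the borderline case $c_{11}=c_{22}$ uniformly and makes the role of the free parameter $g_1^n$ transparent --- the one place where your write-up is a little thin is precisely the logarithmic $c_{11}=c_{22}$ subcase, which you dismiss in a phrase.
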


    \begin{proof}
    We consider separately the two cases $v=u$ and $v=-u$.

    First, suppose $v=u$. By Lemma \ref{FnGn} for $k=n-1$ and $k=n-2$ we have that $c_{12}=0$ and
    \begin{align*}
    T_{n-1}^n&=\begin{pmatrix}
            \frac{-n(c_{11}+n-1)}{u} & g_1^n\\
            \frac{-nc_{21}}{2u} & \frac{-n(c_{22}+n-1)}{u}
        \end{pmatrix},\quad n\geq 0,\\[.5em]
    T_{n-2}^n&=(n-1)\begin{pmatrix}
           \frac{n(c_{11}+n-1)(c_{11}+n-2)}{2u^2} 
           & -\frac{g_1^n(c_{11}+n-2)}{u}\\
            \frac{nc_{21}(2c_{11}+c_{22}+3n-4)}{6u^2}
           &\frac{n(c_{22}+n-1)(c_{22}+n-2)-c_{21}ug_1^n}{2u^2} 
        \end{pmatrix}, \quad n\geq 1.
    \end{align*}
    Since $C$ is not a diagonal matrix (otherwise $D$ would reduce), we have $c_{21}\neq 0$. 
    Now, by Lemma \ref{AnBn} we have
        \begin{equation*}
        B_{n}=\begin{pmatrix}
            \frac{c_{11}+2n}{u} & g_1^n-g_1^{n+1}\\[.5em]
            \frac{c_{21}}{2u} & \frac{c_{22}+2n}{u}
        \end{pmatrix},\quad
        A_n=\begin{pmatrix}
            \frac{2n(c_{11}+n-1)-c_{21}ug_1^n}{2u^2} 
            &\frac{(c_{11}-c_{22}-2)g_1^n}{u}\\
            \frac{nc_{21}(c_{22}-c_{11}+2)}{6u^2}
            &\frac{2n(c_{22}+n-1)+c_{21}ug_1^n}{2u^2} 
        \end{pmatrix}.
        \end{equation*}
        
        By Lemma \ref{Sn}, we have that $S_n$ is diagonal for all $n\geq 0$, because $\nu_n=0$. The fact that $S_nB_n$ is Hermitian for all $n\geq 0$ implies $c_{11},c_{22}\in\mathbb{R}$. The identity $S_n=S_{n-1}A_n$ implies that $A_n$ is also diagonal. Since $c_{21}\neq 0$, we have $c_{11}=c_{22}+2$. It can be shown that this family does not reduce, because there is no non-scalar matrix $T$ which commutes with $A_n$ and $B_n$.

        \bigskip
        
        Now we solve the first-order symmetry equation $2(WF_2)'=WF_1+F_1^*W$, where $F_2(t)=tI$, $F_1(t)=C-tU$ and $W(t)=(w_{ij}(t))$ is a symmetric positive definite matrix. It is given by
        \[
        0=\begin{pmatrix}
            tw_{11}'+(tu-c_{22}-1)w_{11}-\Re(c_{21}w_{12})    
            & tw_{12}'+(tu-c_{22})w_{12}-\frac{\overline{c_{21}}w_{22}}{2}\\[.5em]
            t\overline{w_{12}}'+(tu-c_{22})\overline{w_{12}}-\frac{c_{21}w_{22}}{2}
            & tw_{22}'+(tu-c_{22}+1)w_{22}
            \end{pmatrix}.
        \]
        The solution of this system is
        \[
        W(t)=e^{-tu}t^{c_{22}-1}\begin{pmatrix}
            k_3t^2-\Re(k_2c_{21})t+\frac{k_1|c_{21}|^2}{4} 
            & k_2 t-\frac{k_1\overline{c_{21}}}{2}\\[.5em]
            \overline{k_2}t-\frac{k_1c_{21}}{2}
            & k_1
        \end{pmatrix},
        \]
        where $k_1,c_{22}>0$ for $w_{22}$ to be positive and integrable. Since 
        $S_0=\int W(t)dt$ is diagonal, then
        \[
        0=\int_{0}^{\infty} w_{12}(t)dt=\dfrac{\Gamma(c_{22})(2k_2c_{22}-k_1u\overline{c_{21}})}{2u^{c_{22}+1}}.  
        \]
        It follows that  $k_2=\frac{k_1u\overline{c_{21}}}{2c_{22}}$. We can consider without loss of generality $k_1=1$ and then $W(t)$ becomes the weight in the statement. It is straightforward that this weight verifies the second-order symmetry equation as well. And that does not reduce.

        \bigskip
        
        Now, suppose $v=-u$. By Lemma \ref{FnGn} for $k=n-1$ and  Lemma \ref{AnBn} we have that $c_{21}=0$ and
        \[
        T_{n-1}^n=\begin{pmatrix}
            \frac{(c_{11}+n-1)}{u} & \frac{-n c_{12}}{2u}\\[.5em]
            f_2^{n} & \frac{-n(c_{22}+n-1)}{u}
        \end{pmatrix}, \quad B_n=\begin{pmatrix}
                    \frac{c_{11}+2n}{u} & \frac{c_{12}}{2u}\\[.5em]
                    f_2^{n}-f_2^{n+1} & \frac{c_{22}+2n}{u}
                \end{pmatrix}, \quad n\geq 0.
            \]
        The fact that $S_n$ is diagonal and $S_nB_n$
        is Hermitian for all $n\geq 0$, implies that $c_{11}$ and $c_{22}$ are real numbers.
        
        Now the entry $(1,1)$ of the first-order symmetry equation is $ 2(tw_{11}'+w_{11})=2(c_{11}-tu)w_{11}$,
        whose solution is a multiple of $e^{-tu}t^{c_{11}-1}$. Since $v>0$ and $v=-u$ we have that $w_{11}$ is not integrable in $(0,\infty)$. Therefore, there is no weight for $v=-u$.
    \end{proof}
        
\bigskip  

    \begin{proposition}
       If $D$ is the operator \eqref{Desc} with $v\neq |u|$, then there is no weight such that the family of monic orthogonal polynomials $\{P_n\}_{n\in\mathbb{N}_0}$ satisfies $DP_n=P_n\Delta_n$ for all $n\geq 0$.
    \end{proposition}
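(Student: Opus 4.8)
The plan is to rerun, in this last case, the Favard-type analysis used throughout the section and to show that it always forces a reducible situation, contradicting the standing hypothesis that $\{P_n\}$ is irreducible. We already know $v_{21}=0$, $u\neq0$ and $v>0$; since $\mu_n-\lambda_n=v\neq0$, the identity $s_{22}^n\nu_n=(\mu_n-\lambda_n)\overline{s_{12}^n}$ of Lemma~\ref{Sn} (with $\nu_n=-v_{21}=0$) gives $s_{12}^n=0$, so every $S_n=\langle P_n,P_n\rangle$ is diagonal, say $S_n=\operatorname{diag}(a_n,b_n)$ with $a_n,b_n>0$. Because $v\neq|u|$ and $u\neq0$, the matrices $\lambda_nI-\Delta_{n-1}=\operatorname{diag}(-u,-u-v)$ and $\mu_nI-\Delta_{n-1}=\operatorname{diag}(v-u,-u)$ are invertible, so Lemma~\ref{FnGn} with $k=n-1$ determines $T_{n-1}^n$ explicitly and Lemma~\ref{AnBn} yields
\[
B_n=\begin{pmatrix}\dfrac{c_{11}+2n}{u}&\dfrac{c_{12}}{u-v}\\[0.9em]\dfrac{c_{21}}{u+v}&\dfrac{c_{22}+2n}{u}\end{pmatrix},\qquad n\geq0 .
\]

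Next I would invoke Favard's conditions. The diagonal entries of $S_nB_n$ are $a_n(c_{11}+2n)/u$ and $b_n(c_{22}+2n)/u$, so their reality forces $c_{11},c_{22}\in\mathbb{R}$; the off-diagonal entries are $a_nc_{12}/(u-v)$ and $b_nc_{21}/(u+v)$, so the Hermiticity of $S_nB_n$ reads $a_nc_{12}/(u-v)=b_n\overline{c_{21}}/(u+v)$ for all $n$. If $c_{12}=0$, this forces $c_{21}=0$ as well (using $u+v\neq0$), so $C$, $U$ and $V$ are all diagonal; then an induction with Lemma~\ref{FnGn}, or directly the equation $DP_n=P_n\Delta_n$ together with $v\neq|u|$, shows that every $P_n$ is diagonal, whence $\operatorname{diag}(1,0)$ lies in its commutant and Theorem~\ref{Pnreducible} makes $\{P_n\}$ reducible, a contradiction. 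So from here on $c_{12}\neq0$; then $c_{21}\neq0$ and $a_n/b_n$ equals a fixed positive constant $\kappa$.

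Then I would exploit $S_n=S_{n-1}A_n$: since the $S_n$ are diagonal, $A_n$ is diagonal, and the constancy of $a_n/b_n$ forces $(A_n)_{11}=(A_n)_{22}$ for every $n$. Computing $T_{n-2}^n$ from Lemma~\ref{FnGn} with $k=n-2$ and then $A_n=T_{n-2}^n-T_{n-1}^{n+1}-T_{n-1}^nB_n$, one finds that $(A_n)_{12}$, $(A_n)_{21}$ and $(A_n)_{11}-(A_n)_{22}$ are polynomials in $n$ of degree at most two; imposing that they vanish identically and comparing coefficients produces an over-determined system in the real parameters $c_{11},c_{22},u,v$ which, together with $v>0$, $u\neq0$, $v\neq|u|$ and $c_{12}c_{21}\neq0$, has no solution. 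This coefficient bookkeeping is the computational core of the argument and is where I expect the main effort to lie; a possible shortcut here, once $c_{11},c_{22}$ are sufficiently constrained, is to pass to the symmetry equations of Theorem~\ref{symcond} and check directly that no positive-definite integrable $W$ can solve them.

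The one genuinely different case is $v=2|u|$: for $v=2u$ with $u>0$ the matrix $\mu_nI-\Delta_{n-2}=\operatorname{diag}(v-2u,-2u)$ is singular (and symmetrically $\lambda_nI-\Delta_{n-2}$ when $v=-2u$, $u<0$), so the differential equation leaves the entry $(G_{n-2}^n)_1$ of $T_{n-2}^n$ undetermined. There I would first read off from the singular row of \eqref{Gn} (resp.\ of \eqref{Fn}) the consistency requirement $c_{11}=c_{22}+1$ (resp.\ $c_{11}=c_{22}-1$); that free entry then enters only $(A_n)_{12}$, so the condition $(A_n)_{12}=0$ merely turns it into a consistent first-order recursion, whereas $(A_n)_{21}=0$ and $(A_n)_{11}=(A_n)_{22}$ remain genuine constraints on $c_{22}$ and $u$ and are again incompatible. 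In every case one reaches a contradiction, so no weight $W$ as in the statement exists. The hardest part will be keeping this coefficient comparison tractable, and in particular dispatching the degenerate subcases $v=\pm2u$, where $P_n$ is not pinned down by $DP_n=P_n\Delta_n$ alone and one must lean on the recurrence-coefficient identities instead.
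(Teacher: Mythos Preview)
Your outline tracks the paper's proof closely: diagonal $S_n$ from Lemma~\ref{Sn}, the explicit $B_n$, Hermiticity forcing $c_{11},c_{22}\in\mathbb{R}$ and $a_n/b_n$ constant, then $A_n$ diagonal with $(A_n)_{11}=(A_n)_{22}$. That is exactly the paper's route, and your identification of the degenerate subcases $v=\pm 2u$ is more careful than the paper, which leaves them implicit (the consistency condition you find there, $c_{11}=c_{22}\pm 1$, is precisely the specialization of the paper's general relation $c_{11}=c_{22}+2u/v$, so nothing new happens).

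The one point where your plan is off is the endgame. The system you describe is \emph{not} over-determined in the algebraic sense: the constraints $(A_n)_{12}=0$, $(A_n)_{21}=0$, $(A_n)_{11}=(A_n)_{22}$ for all $n$ admit a genuine solution, namely $c_{11}=c_{22}+2u/v$ together with $c_{12}c_{21}=(v^2-u^2)/v^2$. The contradiction comes only after feeding this back into the positivity condition you derived earlier, $a_n/b_n=\dfrac{c_{21}(u-v)}{\overline{c_{12}}(u+v)}>0$: substituting the forced value of $c_{12}c_{21}$ gives $a_n/b_n=-\dfrac{|c_{21}|^2v^2}{(u+v)^2}<0$. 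So you should not look for an inconsistency among the polynomial coefficients alone; compute the unique algebraic solution and then check that it makes $s_{11}^n$ negative. Your suggested fallback to the symmetry equations is unnecessary.
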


    \begin{proof}
    From Lemma \ref{FnGn} for $k=n-1$ and Lemma \ref{AnBn}, it follows that
     \begin{equation*}
        T_{n-1}^n=\begin{pmatrix}
            \frac{-n(c_{11}+n-1)}{u} & \frac{-nc_{12}}{u-v}\\[.5em]
            \frac{-nc_{21}}{u+v} & \frac{-n(c_{22}+n-1)}{u}
        \end{pmatrix}, \quad B_n=\begin{pmatrix}
            \frac{c_{11}+2n}{u} & \frac{c_{12}}{u-v}\\[.5em]
            \frac{c_{21}}{u+v} & \frac{c_{22}+2n}{u}
        \end{pmatrix}, \quad n\geq 0.
        \end{equation*}
   
        The fact that $S_n$ is diagonal and $S_nB_n$ is Hermitian for all $n\geq 0$ implies that $c_{11},c_{22}\in\mathbb{R}$ and 
        \[
        s_{11}^n\overline{c_{12}}(u+v)=s_{22}^nc_{21}(u-v).
        \]  Since $s_{11}^n$, $s_{22}^n$ are positive and $D$ is non-diagonal, we have that both $c_{12}$ and $c_{21}$ are non-zero numbers. Consequently, we derive the relation:
        \begin{equation}\label{s1esc}
        s_{11}^n=\frac{c_{21}(u-v)}{\overline{c_{12}}(u+v)}s_{22}^n, \quad n\geq 0.
        \end{equation}     

        By Lemma \ref{FnGn} for $k=n-2$ and Lemma \ref{AnBn} we obtain
        \begin{align*}
        T_{n-2}^n&=\begin{pmatrix}
            \frac{n(n-1)}{2u}\left(\frac{(c_{11}+n-1)(c_{11}+n-2)}{u}+\frac{c_{12}c_{21}}{u+v}\right) & g_3^n\\
            f_4^n & \frac{n(n-1)}{2u}\left(\frac{(c_{22}+n-1)(c_{22}+n-2)}{u}+\frac{c_{12}c_{21}}{u-v}\right)
        \end{pmatrix},\\
            A_n&=\begin{pmatrix}
                    n\left(\frac{c_{11}+n-1}{u^2}+\frac{c_{12}c_{21}v}{u(u^2-v^2)}\right)
                    &\frac{c_{12}n\bigl(c_{11}+c_{22}+3n-1)\bigr)}{u(u-v)}+g_3^n-g_3^{n+1}\\[.5em]
                    \frac{c_{21}\bigl(c_{11}+c_{22}+3n-1)\bigr)}{u(u+v)}+f_4^n-f_4^{n+1}
                    & n\left(\frac{c_{22}+n-1}{u^2}-\frac{c_{12}c_{21}v}{u(u^2-v^2)}\right)      \end{pmatrix},
        \end{align*} 
        where 
        \begin{equation} \begin{split}\label{condg3}
            (v-2u)g_3^n=c_{12}n(n-1)\left(\frac{c_{11}+n-2}{v-u}+\frac{c_{22}+n-1}{u}\right), \\(v+2u)f_4^n=c_{21}n(n-1)\left(\frac{c_{11}+n-1}{u}+\frac{c_{22}+n-2}{u+v}\right).
        \end{split}
        \end{equation}
            
        From $S_n=S_{n-1}A_n$ and $S_n$ being diagonal, it follows that $A_n$ is diagonal. 
        Solving the recurrence relation of the entries $(1,2)$ and $(2,1)$ with the initial conditions $g_3^n(1)=0$ and $f_4^n(1)=0$ we have
        \begin{equation*}
            g_3^n=\frac{c_{12}\,  n (n-1)( c_{11} + c_{22}+ 2n-2)}{2 u (u - v)}, \quad f_4^n=\frac{c_{21}\,  n (n-1)\bigl( c_{11} + c_{22}+ 2n-2)\bigr)}{2 u (u + v)},\quad n\geq 2.   
        \end{equation*}
        Then equations \eqref{condg3} imply $c_{11}=c_{22}+\frac{2u}{v}$. 

        Now solve the recurrence relation of $S_n=S_{n-1}A_n$. The entries $(1,1)$ and $(2,2)$ lead respectively to
        \[
        s_{22}^n=\frac{s_{22}^0 n!}{u^{2n}}\left(c_{22}-\frac{uv^2(2-c_{12}c_{21})-2u^3}{v(u^2-v^2)}\right)_n \quad \text{and} \quad s_{22}^n=\frac{s_{22}^0 n!}{u^{2n}}\left(c_{22}-\frac{c_{12}c_{21}uv}{u^2-v^2}\right)_n.
        \]
        The equality holds if $c_{12}=\frac{v^2-u^2}{c_{21}v^2}$, but thus $s_{11}^n<0$.
    \end{proof}

    \bigskip   


\section{\texorpdfstring{$U$}{TEXT} has two different eigenvalues}\label{sec6}

    In this section, we examine the third and last Jordan canonical form of matrix $U$. Specifically, we study operators $D$ of the form
    \begin{equation}\label{Ddiagonal}
    D=t I \partial^2
            +   \begin{pmatrix}
                    c_{11} - t u_1 & c_{12}\\
                    c_{21}  & c_{22} - t u_2
                \end{pmatrix} \partial
            -   \begin{pmatrix}
                    v & 0 \\
                    v_{21} & 0
                \end{pmatrix}, 
    \end{equation}
    where $t\in(0,\infty)$, $c_{ij},v_{21}\in\mathbb{C}$, $u_1,u_2,v\in\mathbb{R}$ and $u_1\neq u_2$. If $\{P_n\}_{n\in\mathbb{N}_0}$ is a sequence of monic orthogonal polynomials satisfying $DP_n=P_n\Delta_n$ for all $n\geq 0$, then the eigenvalues are 
    \[
        \Delta_n=\begin{pmatrix}
                       \lambda_n  & 0\\
                        \nu_n & \mu_n
                \end{pmatrix}
                =\begin{pmatrix}
                    -nu_1-v & 0\\
                    -v_{21} & -nu_2
                \end{pmatrix}.
    \]
    
    \begin{proposition}
    If the second-order differential operator \eqref{Ddiagonal} satisfies $DP_n=P_n\Delta_n$ for all $n\geq 0$, then $u_1$ and $u_2$ are non-zero.
    \end{proposition}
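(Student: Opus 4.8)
The plan is to follow the template of Proposition~\ref{v21eq0} and of the analogous statement in Section~\ref{sec5}: assume that one of $u_1,u_2$ vanishes and force a contradiction out of the low-order coefficient recurrences of Lemma~\ref{FnGn}. Since $u_1\neq u_2$, if one eigenvalue is zero the other is not, and that nonvanishing is precisely what will make a certain polynomial in $n$ non-constant. I will treat the cases $u_1=0$ and $u_2=0$ directly rather than appeal to a row swap, since a swap does not preserve the normal form \eqref{Ddiagonal} when $U$ is diagonal but non-scalar.

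Concretely, I would instantiate \eqref{Fn} and \eqref{Gn} at $k=n-1$ using $\lambda_n=-nu_1-v$, $\mu_n=-nu_2$, $\nu_n=-v_{21}$, so that $\lambda_nI-\Delta_{n-1}$ has first column $(-u_1,\,v_{21})^{\mathsf T}$ and $(2,2)$-entry $(n-1)u_2-nu_1-v$, while $\mu_nI-\Delta_{n-1}$ has first column $\bigl((n-1)u_1-nu_2+v,\,v_{21}\bigr)^{\mathsf T}$ and $(2,2)$-entry $-u_2$. Writing $F_{n-1}^n=(f_1^n,f_2^n)^{\mathsf T}$ and $G_{n-1}^n=(g_1^n,g_2^n)^{\mathsf T}$, the first rows of the two equations give $\bigl((n-1)u_1-nu_2+v\bigr)g_1^n=nc_{12}$ and $-u_1f_1^n=n(c_{11}+n-1)+v_{21}g_1^n$, and the second row of the $G$-equation gives $v_{21}g_1^n-u_2g_2^n=n(c_{22}+n-1)$. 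The only point needing a little care is that when $u_1=0$ (resp.\ $u_2=0$) the relevant block of $\lambda_nI-\Delta_{n-1}$ (resp.\ of $\mu_nI-\Delta_{n-1}$) is singular, so these matrix identities must be read componentwise as constraints, exactly as in Proposition~\ref{v21eq0}.

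If $u_1=0$, the $F$-relation reads $v_{21}g_1^n=-n(c_{11}+n-1)$ and the $G$-relation reads $(v-nu_2)g_1^n=nc_{12}$; when $v_{21}=0$ the former forces $n(c_{11}+n-1)=0$ for all $n\geq1$, which is impossible, and when $v_{21}\neq0$ I would solve $g_1^n$ from the $F$-relation, substitute into the $G$-relation, cancel the common factor $n$ and clear denominators, and obtain $(v-nu_2)(c_{11}+n-1)=-v_{21}c_{12}$ for every $n\geq1$, whose left side is a quadratic polynomial in $n$ with leading coefficient $-u_2\neq0$ — a contradiction. The case $u_2=0$ is parallel: the second row of the $G$-equation gives $v_{21}g_1^n=n(c_{22}+n-1)$ and the first row gives $\bigl((n-1)u_1+v\bigr)g_1^n=nc_{12}$, so either $v_{21}=0$ and $n(c_{22}+n-1)=0$ for all $n\geq1$, impossible, or the same elimination yields $\bigl((n-1)u_1+v\bigr)(c_{22}+n-1)=v_{21}c_{12}$ for all $n\geq1$, again a quadratic in $n$ with leading coefficient $u_1\neq0$ equal to a constant. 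In every branch we reach a contradiction, so $u_1$ and $u_2$ are both non-zero. I do not expect any genuine obstacle beyond the bookkeeping flagged above.
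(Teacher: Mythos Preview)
Your proof is correct and follows essentially the same approach as the paper: instantiate the recurrences \eqref{Fn} and \eqref{Gn} at $k=n-1$, observe that when $u_1=0$ (resp.\ $u_2=0$) the first-row relation of the $F$-equation (resp.\ second-row relation of the $G$-equation) forces $v_{21}\neq0$ and determines $g_1^n$, and then substitution into the first row of the $G$-equation yields a quadratic in $n$ equal to a constant. The paper dispatches $u_2=0$ with ``entirely analogous'' whereas you spell it out, and you separate the $v_{21}=0$ subcase explicitly, but the argument is the same.
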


    \begin{proof}
    Suppose $u_1=0$. Equations \eqref{Fn}  and \eqref{Gn} for $k=n-1$ are given by 
        \begin{align}
            \begin{pmatrix}
                0\\
                v_{21}f_1^{n}+ \bigl(nu_2-u_2-v\bigr)f_2^{n}
            \end{pmatrix} &=n\begin{pmatrix}
                c_{11}+n-1\\
                c_{21}
            \end{pmatrix}+v_{21}\begin{pmatrix}
                g_1^n\\g_2^{n}
            \end{pmatrix},\label{F1ndiagonal}\\
            \begin{pmatrix}
                \bigl(v-nu_2\bigr)g_1^n\\
                v_{21}g_1^n-u_2 g_2^{n}
            \end{pmatrix}&=n\begin{pmatrix}
                c_{12}\\c_{22}+n-1
            \end{pmatrix}\label{G1ndiagonal}.
        \end{align}
    From the first entry of \eqref{F1ndiagonal} we see that since $c_{11}$ is constant, then $v_{21}\neq0$. Thus, 
    $
        g_1^n=\frac{-n(c_{11}+n-1)}{v_{21}}.
    $
    Then the first entry of \eqref{G1ndiagonal} becomes 
    $\frac{(c_{11}+n-1)(v-nu_{2})}{v_{21}}=-c_{12}$ for all $ n\geq 1$, leading to a contradiction. The case $u_2=0$ is entirely analogous.    
    \end{proof}

    \bigskip

    \begin{remark}\label{hipextra}
    From Lemma \ref{Sn} we have the relation $s_{22}^nv_{21}=(\lambda_n-\mu_n)\overline{s_{12}^n}$ for all $n\geq 0$. It is clear that the difference $\lambda_n-\mu_n$ cannot vanish for infinitely many $n$. We dismiss the possibility that it does for some particular value of $n$. Also, the matrices 
    \[
    \lambda_nI-\Delta_{n-k}=\begin{pmatrix}
        -ku_1 & 0\\
        v_{21} & \lambda_{n}-\mu_n-ku_2
    \end{pmatrix}
    \quad
    \text{and}
    \quad
    \mu_nI-\Delta_{n-k}=\begin{pmatrix}
        \mu_n-\lambda_n-ku_1 & 0\\
        v_{21} & -ku_2
    \end{pmatrix}, \quad k=1,2,
    \] 
    are invertible for infinitely many $n$. We consider they are for all $n\geq 0$. 
    
    Both assumptions are equivalent to ask $\lambda_n-\mu_n\neq ku_1,-ku_2$ for all $n\geq 0$ and $k=0,1,2$. Then Lemmas \ref{Sn} and \ref{FnGn} lead to the following result.
\end{remark}

    \begin{corollary}
        If $\lambda_n-\mu_n\neq ku_1,-ku_2$ for all $n\geq 0$ and $k=0,1,2$, then 
        the coefficients $T_{n-1}^n$ and $T_{n-2}^n$ of the monic polynomial $P_n(t)$ are given by
         \begin{align*}
            T_{n-1}^n&=\begin{pmatrix}
                -\frac{n(c_{11}+n-1)+v_{21}g_1^n}{u_1}
                & \frac{nc_{12}}{\mu_n-\lambda_{n-1}}\\[.5em]
                \frac{nc_{21}+v_{21}(g_2^{n}-f_1^{n})}{\lambda_n-\mu_{n-1}}
                & \frac{v_{21}g_1^n-n(c_{22}+n-1)}{u_2}
            \end{pmatrix},\quad n\geq 0\\
            T_{n-2}^n&=(n-1)\begin{pmatrix}
            - \frac{(c_{11}+n-2)f_1^{n}+c_{12}f_2^{n}+\frac{v_{21}g_3^{n}}{n-1}}{2u_1} & \frac{(c_{11}+n-2)g_1^n+c_{12}g_2^{n}}{\mu_n-\lambda_{n-2}}\\[.5em]
            \frac{(c_{22}+n-2)f_2^{n}+c_{21}f_1^{n}+\frac{v_{21}(g_4^{n}-f_3^{n})}{n-1}}{\lambda_n-\mu_{n-2}} & \frac{\frac{v_{21}g_3^{n}}{n-1}-(c_{22}+n-2)g_2^{n}+c_{21}g_1^n}{2u_2}
    \end{pmatrix},\quad n\geq 1.
         \end{align*}
    \end{corollary}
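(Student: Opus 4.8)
\emph{Proof plan.} The plan is to read off $T_{n-1}^n$ and $T_{n-2}^n$ directly from Lemma \ref{FnGn}. Specializing the recursions \eqref{Fn} and \eqref{Gn} to a fixed index $k$ turns each into a lower-triangular $2\times2$ linear system for the columns $F_k^n$, $G_k^n$, with coefficient matrices $\lambda_nI-\Delta_k$ and $\mu_nI-\Delta_k$. For $k=n-1$ and $k=n-2$ these are exactly the matrices displayed in Remark \ref{hipextra}, and the running hypothesis — together with $u_1,u_2\neq0$ from the preceding Proposition — is precisely what makes them invertible, so the systems are uniquely solvable.

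First I would take $k=n-1$. Since $F_n^n=\begin{pmatrix}1\\0\end{pmatrix}$, $G_n^n=\begin{pmatrix}0\\1\end{pmatrix}$ and $\nu_n=-v_{21}$, equations \eqref{Fn} and \eqref{Gn} become $(\lambda_nI-\Delta_{n-1})F_{n-1}^n=n\begin{pmatrix}c_{11}+n-1\\c_{21}\end{pmatrix}+v_{21}G_{n-1}^n$ and $(\mu_nI-\Delta_{n-1})G_{n-1}^n=n\begin{pmatrix}c_{12}\\c_{22}+n-1\end{pmatrix}$. Using $\lambda_{n-1}=\lambda_n+u_1$ and $\mu_{n-1}=\mu_n+u_2$, the coefficient matrices are $\begin{pmatrix}-u_1&0\\v_{21}&\lambda_n-\mu_{n-1}\end{pmatrix}$ and $\begin{pmatrix}\mu_n-\lambda_{n-1}&0\\v_{21}&-u_2\end{pmatrix}$, invertible by Remark \ref{hipextra}. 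Solving the $G$-system first and then the $F$-system (whose inhomogeneous term carries the $G_{n-1}^n$ just found) produces the four entries of $T_{n-1}^n$ in the asserted form.

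Next I would repeat the computation with $k=n-2$, now treating the columns $F_{n-1}^n,G_{n-1}^n$ as known. The right-hand sides of \eqref{Fn} and \eqref{Gn} are $(n-1)(C+(n-2)I)F_{n-1}^n+v_{21}G_{n-2}^n$ and $(n-1)(C+(n-2)I)G_{n-1}^n$; the coefficient matrices become $\begin{pmatrix}-2u_1&0\\v_{21}&\lambda_n-\mu_{n-2}\end{pmatrix}$ and $\begin{pmatrix}\mu_n-\lambda_{n-2}&0\\v_{21}&-2u_2\end{pmatrix}$ (using $\lambda_{n-2}=\lambda_n+2u_1$, $\mu_{n-2}=\mu_n+2u_2$), again invertible by Remark \ref{hipextra}. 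Solving the $G$-system first, then the $F$-system, and expanding $(C+(n-2)I)F_{n-1}^n$ and $(C+(n-2)I)G_{n-1}^n$ componentwise in terms of the entries $f_i^n,g_i^n$ of $T_{n-1}^n$ and $T_{n-2}^n$ gives exactly the displayed expression for $T_{n-2}^n$; the statement records the solution without back-substituting the off-diagonal pieces $g_3^n,g_4^n,f_3^n$, but that is purely a matter of presentation.

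I do not anticipate any genuine obstacle: the argument is elementary linear algebra with $2\times2$ triangular matrices. The only care needed is clerical — correctly tracking the shifts $\lambda_{n-k}=\lambda_n+ku_1$, $\mu_{n-k}=\mu_n+ku_2$ when identifying the diagonal entries of the coefficient matrices, and observing that the nonvanishing of the six scalar denominators that appear, namely $u_1$, $u_2$, $\lambda_n-\mu_{n-1}$, $\mu_n-\lambda_{n-1}$, $\lambda_n-\mu_{n-2}$, $\mu_n-\lambda_{n-2}$, is exactly the content of the hypothesis as it is formulated in Remark \ref{hipextra}.
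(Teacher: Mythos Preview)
Your proposal is correct and is exactly the approach the paper has in mind: the corollary is stated immediately after the sentence ``Then Lemmas \ref{Sn} and \ref{FnGn} lead to the following result,'' with no further argument given. Your plan simply unpacks that citation, specializing \eqref{Fn} and \eqref{Gn} to $k=n-1,n-2$ and solving the resulting triangular $2\times2$ systems with the invertibility supplied by Remark \ref{hipextra}, which is precisely the intended derivation.
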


    \begin{theorem}\label{teodiagonal}
    There is no irreducible weight $W$ such that the operator $D$ given by \eqref{Ddiagonal} is $W$-symmetric.
    \end{theorem}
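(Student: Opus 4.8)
The plan is to carry the program of Section~\ref{sec3} to completion in this last Jordan-form case and to show that the Favard conditions, together with the symmetry constraints, never admit an irreducible weight. Throughout I work under the genericity hypotheses of Remark~\ref{hipextra}, so that the corollary above applies and provides closed forms for $T_{n-1}^n$ and $T_{n-2}^n$. First I would substitute those expressions into Lemma~\ref{AnBn} to obtain explicit formulas for $B_n$ and $A_n$ in terms of $c_{11},c_{12},c_{21},c_{22},u_1,u_2,v,v_{21}$ and the auxiliary sequences $f_1^n,f_2^n,g_1^n,g_2^n$, the latter being determined by the recursions \eqref{Fn}--\eqref{Gn}; since $u_1\neq u_2$, every scalar denominator that occurs ($u_1$, $u_2$, and $\mu_n-\lambda_{n-1},\lambda_n-\mu_{n-1},\mu_n-\lambda_{n-2},\lambda_n-\mu_{n-2}$) is nonzero, so these formulas are legitimate. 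Then I would impose the two Favard conditions, namely $S_n=S_{n-1}A_n$ nonsingular for $n\geq 1$ and $S_nB_n$ Hermitian for $n\geq 0$, together with the identity $\overline{s_{12}^n}=s_{22}^n v_{21}/(\lambda_n-\mu_n)$ furnished by Lemma~\ref{Sn} (note $\lambda_n-\mu_n=n(u_2-u_1)-v\in\mathbb{R}$, nonzero for every $n$ by the genericity hypothesis). The natural case split is on whether $v_{21}=0$, and, inside each branch, on whether $c_{12}$ and $c_{21}$ vanish.

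In the branch $v_{21}=0$ one gets $s_{12}^n=0$ for all $n$, hence $S_n$ is diagonal, $A_n=S_{n-1}^{-1}S_n$ is diagonal, and $S_nB_n$ Hermitian forces the diagonal entries of $B_n$ to be real (so $c_{11},c_{22}\in\mathbb{R}$) together with $s_{11}^n(B_n)_{12}=s_{22}^n\overline{(B_n)_{21}}$. Reading the $(1,2)$ and $(2,1)$ entries of the identity $A_n=T_{n-2}^n-T_{n-1}^{n+1}-T_{n-1}^nB_n=0$ and using the nonvanishing of the denominators above, I expect to be driven to $c_{12}=0$ and $c_{21}=0$: the ratio $s_{11}^n/s_{22}^n$ is on the one hand a Pochhammer-type product built from the $A_k$'s and on the other hand a fixed rational function of $n$ dictated by $(B_n)_{12}$ and $(B_n)_{21}$, and these can agree only in the trivial case. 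Then $C$, $U$, $V$, and hence $A_n$, $B_n$, are all diagonal, so the non-scalar matrix $\operatorname{diag}(1,0)$ lies in the commutant of $\{P_n\}$; by Theorem~\ref{Pnreducible} the family reduces, and then, exactly as in the closing paragraph of the proof of Theorem~\ref{Teovequ}, no $W$ making $D$ symmetric can be irreducible.

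In the branch $v_{21}\neq 0$, equation \eqref{v21} forces $v\neq 0$ and makes $\overline{s_{12}^n}$ an explicit nonzero scalar multiple of $s_{22}^n$. Substituting this into the Hermiticity of $S_nB_n$ and into $S_n=S_{n-1}A_n$ reduces everything to scalar recursions for $s_{11}^n$ and $s_{22}^n$; eliminating $s_{11}^n$ yields a recursion for $s_{22}^n$ that must be compared with the one coming directly from the $(2,2)$ entry of $S_n=S_{n-1}A_n$, and positive-definiteness $s_{11}^ns_{22}^n>|s_{12}^n|^2$ must also hold. I expect a contradiction of the same flavour as at the ends of Sections~\ref{sec4} and \ref{sec5}: either the two Pochhammer-type solutions for $s_{22}^n$ cannot coincide, or $s_{11}^n$ is eventually negative. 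One may alternatively dispose of this branch by first conjugating $D$ by $\left(\begin{smallmatrix}1&0\\ v_{21}/v&1\end{smallmatrix}\right)$, which annihilates $v_{21}$ at the price of turning $U$ into a lower triangular matrix with distinct diagonal entries, and then rerunning the previous analysis on the conjugated operator.

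The step I expect to be the genuine obstacle is the bookkeeping in the $v_{21}\neq 0$ branch: the auxiliary sequences must be solved from \eqref{Fn}--\eqref{Gn} and substituted before anything collapses, and the sub-subcases $c_{12}=0$, $c_{21}=0$, both, and neither, have to be carried far enough in $n$ before the inconsistency surfaces. The $v_{21}=0$ branch is comparatively routine; its one delicate point is the claim that the off-diagonal data is truly forced to vanish rather than merely constrained, which I would establish by exploiting that a product over $k$ of the rational quotients $(A_k)_{11}/(A_k)_{22}$ cannot equal a single rational function of $n$ unless $c_{12}c_{21}=0$. Once both branches are closed, this Jordan-form case contributes nothing to the classification, consistently with the three families of Section~\ref{sec7}.
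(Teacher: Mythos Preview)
Your overall strategy mirrors the paper's, but the $v_{21}=0$ branch contains a genuine gap. You assert that the off-diagonal conditions will force $c_{12}=c_{21}=0$, arguing via a Pochhammer-versus-rational mismatch for $s_{11}^n/s_{22}^n$. This fails in the subcase
\[
u_1=-u_2=v,\qquad c_{11}+c_{22}=2,\qquad c_{12}c_{21}<0.
\]
Here $(B_n)_{12}$ and $(B_n)_{21}$ both carry the factors $u_1-v$ and $u_2+v$ respectively, so $B_n$ is diagonal for all $n$ and gives \emph{no} constraint on the ratio $s_{11}^n/s_{22}^n$; at the same time the off-diagonal entries of $A_n$ come out proportional to $c_{11}+c_{22}-2$ and vanish without touching $c_{12}$ or $c_{21}$. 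All the Favard conditions are then consistently satisfied with $C$ genuinely non-diagonal, so neither the three-term recurrence nor Theorem~\ref{Pnreducible} closes this subcase. The paper has to go further: it writes down the symmetry equations of Theorem~\ref{symcond} for this surviving family, solves them explicitly (the answer involves the confluent hypergeometric function $U$ of the second kind), and shows that the resulting $w_{22}$ is never integrable on $(0,\infty)$. That analytic step is unavoidable and is missing from your plan.

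A smaller point: your alternative for the $v_{21}\neq 0$ branch---conjugating by $\left(\begin{smallmatrix}1&0\\ v_{21}/v&1\end{smallmatrix}\right)$ to kill $v_{21}$---is circular. That conjugation turns $U$ into a lower-triangular matrix with distinct diagonal entries $u_1\neq u_2$; bringing $U$ back to the diagonal Jordan form needed for this section reinstates a nonzero $v_{21}$. The paper instead handles $v_{21}\neq 0$ directly: from the Hermiticity of $S_nB_n$ it extracts a relation \eqref{s1diag}, and in the case $\overline{c_{12}}(u_1-v)\neq 0$ the identity $S_n=S_{n-1}A_n$ yields a polynomial equation in $n$ whose leading coefficient is a nonzero multiple of $v_{21}$, forcing $v_{21}=0$ after all and then contradicting positive definiteness of $S_n$.
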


    \begin{proof}        
        Using Lemma \ref{AnBn}, we find that
        \[
        B_n=\begin{pmatrix}
            \frac{c_{11}+2n}{u_1}+\frac{c_{12}v_{21}(v-u_1)}{u_1(\mu_n-\lambda_{n-1})(\mu_{n+1}-\lambda_n)}
            &
            \frac{c_{12}(u_1-v)}{u_1(\mu_n-\lambda_{n-1})(\mu_{n+1}-\lambda_n)}\\[1em]
            \frac{(u_2+v)c_{21}}{(\lambda_{n+1}-\mu_n)(\lambda_n-\mu_{n-1})}+v_{21}\left(\frac{g_2^{n}-f_1^{n}}{\lambda_n-\mu_{n-1}}-\frac{g_2(n+1)-f_1^{n}}{\lambda_{n+1}-\mu_{n}}\right)
            &
            \frac{c_{22}+2n}{u_2}+\frac{c_{12}v_{21}(u_1-v)}{u_2(\mu_n-\lambda_{n-1})(\mu_{n+1}-\lambda_n)}
        \end{pmatrix},\quad n\geq 0.
        \]

        The identity \eqref{nun} gives $s_{12}^n=\frac{s_{22}^n\overline{v_{21}}}{\lambda_n-\mu_n}$. Then, entry $(2,2)$ of the Hermitian matrix $S_nB_n$ is  
        \[
        \frac{s_{22}^n}{u_2}\left(c_{22}+2n+\frac{v_{21}c_{12}(u_1-v)\bigl(n(u_2-u_1)-v+u_2\bigr)}{(\mu_n-\lambda_{n-1})(\mu_{n+1}-\lambda_n)(\lambda_n-\mu_n)}\right), \quad n\geq 0.
        \]
        This implies $c_{22}\in\mathbb{R}$ and $(v-u_1)\Im(c_{12}v_{21})=0$. Since entries $(1,2)$ and $(2,1)$ are conjugated, we find the relation
        \begin{equation}\label{s1diag}
        s_{11}^n\overline{c_{12}}(u_1-v)=u_1s_{22}^n(\mu_n-\lambda_{n-1})(\mu_{n+1}-\lambda_n)\left(\frac{v_{21}(b_{11}^n-b_{22}^n)}{\lambda_n-\mu_n}-b_{21}^n\right),
        \end{equation}
        where $b_{ij}^n$ denotes the entry $(i,j)$ in $B_n$. Here we divide the calculations according to the value of $\overline{c_{12}}(u_1-v)$. 

        \
        
        First, suppose $\overline{c_{12}}(u_1-v) \neq 0$. Then we have that $c_{12}v_{21}\in\mathbb{R}$ and
        \[
        s_{11}^n=\frac{u_1s_{22}^n(\mu_n-\lambda_{n-1})(\mu_{n+1}-\lambda_n)}{\overline{c_{12}}(u_1-v)}\left(\frac{v_{21}(b_{11}^n-b_{22}^n)}{\lambda_n-\mu_n}-b_{21}^n\right).
        \]

        On the other hand, the entries $(1,2)$ and $(2,2)$ in the identity $S_n=S_{n-1}A_n$ imply that 
        \[
            \dfrac{\overline{v_{21}}s_{22}^n}{\lambda_n-\mu_n}=s_{11}^{n-1}a_{12}^n+s_{12}^{n-1}a_{22}^n,\quad \text{and} \quad
            s_{22}^n=\overline{s_{12}^{n-1}}a_{12}^n+s_{22}^{n-1}a_{22}^n,
        \]
        where $a_{ij}^n$ is the entry $(i,j)$ of the matrix $A_n$. 
        Thus, we obtain the following
        \[
        \frac{\overline{v_{21}}\left(\frac{v_{21}a_{12}^n}{\lambda_{n-1}-\mu_{n-1}}+a_{22}^n\right)}{\lambda_n-\mu_n}=\frac{\left(\frac{v_{21}(b_{11}^{n-1}-b_{22}^{n-1})a_{12}^n}{\overline{b_{12}^{n-1}}}+\overline{v_{21}a_{22}^n}\right)}{\lambda_{n-1}-\mu_{n-1}}+\frac{b_{21}^{n-1}a_{12}^n}{\overline{b_{12}^{n-1}}}.
        \]
        This is equivalent to an expression of the form $0=p(n)/q(n)$, where $\deg(p)=9$ and $\deg(q)=7$. The leading coefficient of $p$ is $c_{12}(u_1-v)(u_1-u_2)^8v_{21}$, which only vanishes when $v_{21}=0$. The coefficient of $n^7$ is $c_{12}(u_1-v)(u_1-u_2)^6u_1u_2c_{21}(u_2+v)$, which equals zero for $c_{21}(u_2+v)=0$. Then $b_{21}^n=$ for all $n\geq 0$ and thus relation \eqref{s1diag} becomes zero, which contradicts the positive definiteness of $S_n$.
        
        \   
        
        Now, suppose $\overline{c_{12}}(u_1-v)=0$. The relation \eqref{s1diag} becomes
        \[
        0=\frac{v_{21}(b_{11}^n-b_{22}^n)}{\lambda_n-\mu_n}-b_{21}^n,
        \]
        whose right side is a fourth-degree polynomial in $n$ with leading coefficient $(u_1-u_2)^4v_{21}$. Then $v_{21}=0$ and 
        \[
         0=b_{21}^n=\frac{(u_2+v)c_{21}}{(\lambda_{n+1}-\mu_n)(\lambda_n-\mu_{n-1})}, \quad n\geq 1.
        \]
        For $n=0$ the Hermicity of $S_nB_n$ implies 
        \begin{equation}\label{s0b0_diag}
        \frac{c_{12}s_{11}^0}{u_2-v}=\frac{\overline{c_{21}} s_{22}^0}{u_1+v}.
        \end{equation} 
        Since $S_0$ is positive definite and $C$ is non-diagonal, we have $u_1=-u_2=v$.
        Also, the entry $(1,1)$ of $S_nB_n$ given by $\frac{s_{11}^n(c_{11}+2n)}{v}$, implies $c_{11}\in\mathbb{R}$. 
                
        By Lemma \eqref{AnBn}, we have that 
        
        \[
        A_n=\begin{pmatrix}
            \frac{4n(c_{11}+n-1)-c_{12}c_{21}}{4v^2}
            &\frac{c_{12}(2-c_{11}-c_{22})}{2v^2(4n^2-1)}\\
            \frac{c_{21}(2-c_{11}-c_{22})}{2v^2(4n^2-1)}
            &\frac{4n(c_{22}+n-1)-c_{12}c_{21}}{4v^2}
        \end{pmatrix}.
        \]
            
        As $S_n=S_{n-1}A_n$ and $S_n$ is diagonal, it follows that $A_n$ is diagonal as well. Then, $c_{22}=2-c_{11}$.
        
        To simplify the following calculations, consider the operator
        \[M^{-1} D M= t I\partial^2 +\begin{pmatrix}
                c_{11}-vt & 1\\
                c_{21}c_{12} & 2-c_{11}+vt
             \end{pmatrix}\partial -\begin{pmatrix}
                 v & 0\\
                 0 & 0
             \end{pmatrix},
        \]
        where $M=\begin{pmatrix}
            c_{12} & 0\\
            0 & 1
        \end{pmatrix}$. From (\ref{s0b0_diag}) and $S_n$ positive definite, we deduce $c_{21}c_{12}=-\frac{\overline{c_{12}}c_{12} s_{11}^0}{s_{22}^0}<0$. In what follows, we denote $c=c_{21}c_{12}<0$.   
        
        Now, we look for a symmetric positive definite matrix $W(t)$ that verifies the first and second-order symmetry equations. Considering $W(t)=(w_{ij}(t))$, the first equation is
        \begin{equation}           
         \label{edo1_diag} 
            \begin{pmatrix}
                2t w'_{11}+2(1-c_{11}+vt) w_{11}-2 c \Re(w_{12})    
                & 2t w'_{12}-c w_{22}-w_{11} \\2t \overline{w'_{12}}-c w_{22}-w_{11}
                & 2t w'_{22}+2(c_{11}-1-v t) w_{22}-2 \Re(w_{12})
                \end{pmatrix}=0.
                \end{equation}
and the second equation,
\begin{equation}           
         \label{edo2_diag} 
            \begin{pmatrix}
                t w''_{11}+(2-c_{11}+vt) w'_{11}+v w_{11}- c w'_{12}    
                & t w''_{12}+ (c_{11} -  v t) w'_{12} - w'_{11} \\
                t \overline{w''_{12}}+ (2-c_{11} +  v t) \overline{w'_{12}} -c w'_{22}
                & t w''_{22}+(c_{11}-vt)w'_{22}-v w_{22}-\overline{w'_{12}}
            \end{pmatrix}=0.
                \end{equation}

        The entry $(1,2)$ of \eqref{edo1_diag} leads to the relation
        \begin{equation}
            \label{w12d_diag}        
        w'_{12}(t)=\dfrac{w_{11}(t)+c w_{22}(t)}{2t}. 
        \end{equation}
        Substituting this relation into the entry $(1,1)$ of the (\ref{edo2_diag}) we obtain
                 \begin{equation}
            \label{w22_diag}
            w_{22}=\frac{2t^2 w''_{11}+2t(2-c_{11}+vt) w'_{11}+(2vt-c)w_{11}}{c^2}.
        \end{equation}
        
        Subsequently, the entry $(1,2)$ of (\ref{edo2_diag}) is a third-order differential equation for $w_{11}$, whose integrable solution to this equation is 
        \begin{equation*}
            w_{11}(t)=e^{-v t} t^{\sqrt{(1 - c_{11})^{2} +c}} k_1  U\left(\frac{1 - c_{11} +\sqrt{(1 - c_{11})^{2} +c}}{2}, 
            1 + \sqrt{(1 - c_{11})^{2} +c}, 
           v t\right)^2 ,
        \end{equation*}
         with  $(1 - c_{11})^{2} +c<1$ and $k_1\in\mathbb{C}$. Here $U$ denotes the confluent hypergeometric function of the second kind.
         However, the function $w_{22}$ given by (\ref{w22_diag}) is not integrable under this condition. With this, the demonstration concludes.
          
    \end{proof}


\section{Conclusions}\label{sec7}

    In the preceding sections, four potential configurations for $D$ and $W$ were derived from Theorems \ref{Teovequ}, \ref{Teoveq2u}, \ref{Teoueq2v} and \ref{Teovequesc}. Here, we present these theorems in equivalent forms and demonstrate the equivalence of two of them.
    \begin{theorem}\label{MainTheo}
        The second-order differential operator
             \begin{equation*}
                D = t \partial^2 + (C - t U ) \partial - V, \quad t\in(0,\infty), C,U,V\in \mathbb{C}^{2\times 2},
            \end{equation*}
        is symmetric with respect to some weight $W$ whose monic polynomials are irreducible if, and only if, the pair $(D, W)$ is equivalent to one of the following
        
        \begin{enumerate}
            \item $\begin{aligned}[t]
      D_{\alpha,\beta,b}&=t I\partial^2 +\begin{pmatrix}
                \alpha+\beta+1-t & 0\\
                -b(\beta-2)t & \alpha+1-t
             \end{pmatrix}\partial -\begin{pmatrix}
                 1 & 0\\
                 -b (\alpha+1) & 0
             \end{pmatrix},\\
      W_{\alpha, \beta,b}(t)&=e^{-t}t^\alpha \begin{pmatrix}
                t^\beta+b^2t^2
                & b t\\[1em]
                 b t
                 & 1
            \end{pmatrix},
            \quad  \alpha>-1, \quad \beta>-1-\alpha, \quad b\in \mathbb{R}-\{ 0 \};
      \end{aligned}$

      \item $\begin{aligned}[t]
           D_{\alpha,b}&=t I\partial^2 +\begin{pmatrix}
                \alpha+5-t & 0\\
                -4b(\alpha+2)t & \alpha+1-t
             \end{pmatrix}\partial -\begin{pmatrix}
                 2 & 0\\
                 -2b (\alpha+2)(\alpha+1) & 0
             \end{pmatrix},\\
            W_{\alpha, b}(t)&=e^{-t}t^{\alpha} \begin{pmatrix}
                t^4+4b^2(\alpha+2)t^2(\alpha+2-t)
                & b t(2(\alpha+2)-t) \\[1em]
                b t(2(\alpha+2)-t)
                & 1
            \end{pmatrix},\quad \alpha>-1,\quad  b\in\mathbb{R}, \, 0<|b|<1;
      \end{aligned}$
      
      \item $\begin{aligned}[t]
          D_{\beta}&=t I\partial^2 +\begin{pmatrix}
                3/2-t & \beta/4\\
                t & 1/2-t
             \end{pmatrix}\partial -\begin{pmatrix}
                 1/2 & 0\\
                 -1/2 & 0
             \end{pmatrix},\\
            W_{\beta}(t)&=e^{-t}t^{-1/2} \begin{pmatrix}
                \dfrac{4t(e^{\sqrt{\beta t}}+e^{-\sqrt{\beta t}})}{\beta}
                &  \dfrac{2\sqrt{t}(e^{\sqrt{\beta t}}-e^{-\sqrt{\beta t}})}{\sqrt{\beta}} \\[1em]
                \dfrac{2\sqrt{t}(e^{\sqrt{\beta t}}-e^{-\sqrt{\beta t}})}{\sqrt{\beta}}
                & e^{\sqrt{\beta t}}+e^{-\sqrt{\beta t}}
            \end{pmatrix}, \quad \beta>0.
      \end{aligned}$
        \end{enumerate}
        \end{theorem}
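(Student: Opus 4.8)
The plan is to assemble the case analysis carried out in Sections~\ref{sec4}--\ref{sec6} and then to verify that the four pairs $(D,W)$ that survive it collapse to exactly the three families listed; the only nontrivial point is the coincidence of two of the four intermediate descriptions.

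\emph{Necessity.} Suppose $D=tI\partial^2+(C-tU)\partial-V$ is $W$-symmetric for an irreducible weight $W$, with monic orthogonal polynomials satisfying $DP_n=P_n\Delta_n$ and $\Delta_n$ lower triangular. By Proposition~\ref{UVtriang} the matrices $U$ and $V$ are lower triangular; replacing $D$ by $D-v_{22}I$ and conjugating by a constant matrix (neither operation affects $W$-symmetry or the polynomials, only the eigenvalues) we may assume $v_{22}=0$ and that $U$ is in one of the three Jordan forms listed at the end of Section~\ref{sec3}. If $U$ has two distinct eigenvalues, Theorem~\ref{teodiagonal} shows there is no irreducible $W$, so this case is empty. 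If $U=uI$ is scalar, the propositions of Section~\ref{sec5} give $v_{21}=0$ and $u\neq0$, show that $v=0$ is impossible (the polynomials would then reduce), allow one to assume $v>0$, and rule out $v\neq|u|$; the remaining possibility $v=|u|$ is exactly Theorem~\ref{Teovequesc}. If $U$ is a non-diagonal Jordan block, Proposition~\ref{v21eq0} gives $v_{21}=0$ and $u,v\neq0$, and Theorems~\ref{Teovequ}, \ref{Teoveq2u} and \ref{Teoueq2v} cover the three exhaustive subcases $|v|=|u|$, then $|v|\neq|u|$ with $c_{12}=0$, then $|v|\neq|u|$ with $c_{12}\neq0$. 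Hence $(D,W)$ is, up to the equivalence above, one of the four pairs produced by Theorems~\ref{Teovequ}, \ref{Teovequesc}, \ref{Teoveq2u} and \ref{Teoueq2v}.

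\emph{Reduction to three families.} Each of these four pairs is now put into normal form by rescaling the variable $t\mapsto t/u$ (so that the eigenvalue of $U$ becomes $1$), conjugating by a lower-triangular matrix to bring $V$ to the displayed shape, and, whenever a free positive constant is still present in the leading entry of $W$, conjugating by a diagonal matrix to normalize it. One checks that Theorem~\ref{Teoveq2u} then reads as the pair $(D_{\alpha,b},W_{\alpha,b})$ of item~(2) and Theorem~\ref{Teoueq2v} as $(D_\beta,W_\beta)$ of item~(3). For the remaining two, set $\alpha=c_{22}-1$ and read the exponent $\tfrac{2c_{21}u}{c_{21}u-c_{22}}$ of the leading entry of $W$ as $\beta$: Theorem~\ref{Teovequ} yields precisely the members of item~(1) with $\beta\neq2$ (the value $\beta=2$ being unattainable there, since it would force $c_{22}=0$, i.e.\ $U$ scalar), while Theorem~\ref{Teovequesc}, whose matrix $U$ is scalar, yields exactly the members of item~(1) with $\beta=2$. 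This is the asserted coincidence of two of the four theorems, so only the three families of the statement remain.

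\emph{Sufficiency and the main obstacle.} Conversely, for each listed pair $(D_{\alpha,\beta,b},W_{\alpha,\beta,b})$, $(D_{\alpha,b},W_{\alpha,b})$ and $(D_\beta,W_\beta)$ one verifies the three symmetry equations together with the boundary conditions of Theorem~\ref{symcond}, so $D$ is $W$-symmetric; the monic orthogonal polynomials satisfy $DP_n=P_n\Delta_n$ with $\Delta_n=-nU-V$ by Proposition~\ref{UVtriang}; and $W$, as well as $\{P_n\}$, is irreducible. All of this is already contained in Theorems~\ref{Teovequ}, \ref{Teovequesc}, \ref{Teoveq2u} and \ref{Teoueq2v}. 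The step I expect to be the real obstacle is the last part of the reduction: producing the explicit conjugating matrices and the exact dictionary between the parameters $(u,c_{ij},\gamma)$ appearing in the Section~4 and Section~5 theorems and the parameters $(\alpha,\beta,b)$, $(\alpha,b)$ and $\beta$ of the three families, and especially confirming that the family arising from the scalar case $U=uI$ is not a genuinely new one but coincides with the slice $\beta=2$ of family~(1). One must also bear in mind throughout that ``equivalent'' is used in the mildly enlarged sense that also permits the additive normalization $D\mapsto D-v_{22}I$ and the rescaling $t\mapsto t/u$; both preserve $W$-symmetry and the eigenfunction relation, and they are precisely what remove the parameters $v_{22}$ and $u$ still present in the intermediate statements.
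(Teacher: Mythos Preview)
Your approach is essentially the paper's: assemble the exhaustive case analysis of Sections~\ref{sec4}--\ref{sec6}, then show by explicit conjugation and rescaling that the four surviving pairs collapse to three, with Theorem~\ref{Teovequesc} filling in the $\beta=2$ slice of family~(1). The paper's proof consists almost entirely of the step you flag as the obstacle: it writes down, for each of the four source theorems, the explicit conjugating matrix $M$, the parameter dictionary, and the change of variable $t=xu$, and checks $W_{\text{new}}=u^{\alpha}M^*WM$ and $D_{\text{new}}=u^{-1}M^{-1}DM$.

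One correction: your justification for why $\beta=2$ is unattainable in Theorem~\ref{Teovequ} is wrong. Setting the exponent $\tfrac{2c_{21}u}{c_{21}u-c_{22}}$ equal to $2$ forces $c_{22}=0$, but $c_{22}$ is an entry of $C$, not of $U$; this has nothing to do with $U$ becoming scalar (in Section~\ref{sec4} the matrix $U$ is the fixed Jordan block $\left(\begin{smallmatrix}u&0\\1&u\end{smallmatrix}\right)$ throughout). The correct reason $\beta=2$ is excluded is simply that Theorem~\ref{Teovequ} requires $c_{22}>0$.
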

    
    \begin{proof}
        \begin{enumerate}
            \item Suppose $\beta\neq 2$. Take the matrix $M=\begin{pmatrix}
                \frac{u^{\beta/2}}{\sqrt{\gamma}}& 0\\
                \frac{u^{\frac{\beta-2}{2}}(\alpha+1)}{(\beta-2)\sqrt{\gamma}} & 1
            \end{pmatrix}$, where $u>0$. Then $W_{\alpha,\beta,b}=u^{\alpha}M^*WM$ and $D_{\alpha,\beta,b}=u^{-1}M^{-1}DM$, where $(W,D)$ is the pair given in Theorem \ref{Teovequ} with $c_{22}=\alpha+1$, $c_{21}=\frac{\beta(\alpha+1)}{u(\beta-2)}$, $b=\frac{u^{\beta/2}}{\sqrt{\gamma}(\beta-2)}$, and the change of variable $t=xu$.
        
            \
        
            Now suppose $\beta=2$ and take $M=\begin{pmatrix}
                    \frac{ub}{m\sqrt{\gamma}} & 0\\
                    b(\alpha+1) & 1
                \end{pmatrix}$, where $u>0$, $m=\frac{c_{21}u}{2(\alpha+1)\sqrt{\gamma}}$ and $b=\frac{|m|}{\sqrt{1-|m|^2}}$. Then, $W_{\alpha,b}(x)=u^{\alpha}M^*W M$ and $D_{\alpha,b}=u^{-1}M^{-1}DM$, where $(W,D)$ is the pair given in Theorem \ref{Teovequesc} with $c_{22}=\alpha+1$ and the change of variable $t=xu$.
        
            \item Consider the matrix $M=\begin{pmatrix}
                4ub(\alpha+2) & 0\\
                (\alpha+1)(\alpha+2) & 1
            \end{pmatrix}$, where $b=\frac{u}{4(\alpha+2)\sqrt{\gamma}}$. Then $W_{\alpha,b}(x)=u^{\alpha}M^*W M$ and $D_{\alpha,b}=u^{-1}M^{-1}DM$, where $(W,D)$ is the pair given in Theorem \ref{Teoveq2u} with $c_{21}=\frac{\alpha+1}{u}$ and the change of variable $t=xu$.
        
            \item Consider the matrix $M=\begin{pmatrix}
                -u & 0\\
                1 & 1
            \end{pmatrix}$. Then $W_{\beta}(x)=u^{-1/2}M^*W M$ and $D_{\beta}=u^{-1}M^{-1}DM$, where $(W,D)$ is the pair given in Theorem \ref{Teoueq2v} with $c_{22}=\frac{\beta+2}{4}$ and the change of variable $t=xu$.
        \end{enumerate}
    \end{proof}
\section{Aknowledgements}
The authors would like to thank Ines Pacharoni and Ignacio Zurri\'an for their careful review and the suggestions provided in this work.

\bibliographystyle{acm}

\end{document}